
%
%
%
%
\documentclass{amsart}
\usepackage{amsmath}
\usepackage{amssymb}
\usepackage{amsbsy}
\usepackage{amsfonts}
\usepackage{pict2e}
\usepackage{graphicx}
\usepackage{multirow}
\usepackage{epstopdf}
\usepackage{verbatim}
\usepackage{subfigure}
\usepackage{algorithm}
\usepackage{algpseudocode}
\usepackage{amsmath}
\usepackage{amssymb}
\newcommand{\argmin}[1]{\underset{#1}{\mathrm{argmin}}}

\usepackage{color}
\usepackage{varwidth}
\usepackage[normalem]{ulem}

\usepackage[table]{xcolor}

\def\e{{\varepsilon}}

\usepackage{graphicx}
\usepackage{epstopdf}
\usepackage{float}
\usepackage{subfig}
\usepackage{epsfig}
\usepackage{fullpage}

\newtheorem{theorem}{Theorem}[section]

\theoremstyle{definition}

\theoremstyle{remark}
\newtheorem{remark}[theorem]{Remark}

\numberwithin{equation}{section}



\begin{document}

\title{An $L^1$ penalty method for general Obstacle problems}
\author{Giang Tran}
\address{Department of Mathematics, University of California, Los Angeles}
\email{giangtran@math.ucla.edu}

\author{Hayden Schaeffer}
\address{Department of Mathematics, University of California, Irvine}
\email{}

\author{William M. Feldman}
\address{Department of Mathematics, University of California, Los Angeles}
\email{wfeldman10@math.ucla.edu}
\author{Stanley J. Osher}
\address{Department of Mathematics, University of California, Los Angeles}
\email{sjo@math.ucla.edu}

\begin{abstract}
We construct an efficient numerical scheme for solving obstacle problems in divergence form. The numerical method is based on a reformulation of the obstacle in terms of an $L^1$-like penalty on the variational problem. The reformulation is an exact regularizer in the sense that for large (but finite) penalty parameter, we recover the exact solution. Our formulation is applied to classical elliptic obstacle problems as well as some related free boundary problems, for example the two-phase membrane problem and the Hele-Shaw model. One advantage of the proposed method is that the free boundary inherent in the obstacle problem arises naturally in our energy minimization without any need for problem specific or complicated discretization.  In addition, our scheme also works for nonlinear variational inequalities arising from convex minimization problems.
\end{abstract}

\maketitle

\section{Introduction}

In this work we are concerned with the construction and implementation of an unconstrained minimization problem which gives the same solution as a corresponding obstacle problem. The classical obstacle problem models the equilibrium state of an elastic membrane stretched atop of a physical obstacle with fixed boundary conditions. This has a direct mathematical interpretation as an energy minimization (\textit{i.e.} the classical elastic energy of the membrane) with the addition of a constraint (\textit{i.e.} the solutions are bounded below by the obstacle). The general obstacle framework has found applications in steady state fluid interaction, thin-plate fluid dynamics, geometry, elastostatics, etc. 

The original theory for obstacle problems centered around minimizations of the form:
\begin{align*} \min_{u{\in K}}  a(u,u) - \left<f,u\right> ,\end{align*} 
where $a(-,-)$ is a bounded and coercive bilinear form on some Sobolev space $V$, $K = \{ v \geq \varphi\}$ for some smooth $\varphi$, and $<,>$ is the standard $L^2$ inner product \cite{rodrigues1987obstacle,kinderlehrer2000introduction,friedman2010variational}. This minimization problem is equivalent to the problem of finding a $u\in K$ satisfying the variational inequality:
\begin{align*} 
a(u,v-u)  \ \geq \ \left<f,v-u\right> \quad \text{ for all} \quad v \in K,
 \end{align*} 
which can be considered as the Euler-Langrange equation for the constrained problem. In this work, we will use an $L^1$-like penalty on the original variational form: 
\begin{align*}
 \min_{u}  a(u,u) - \left<f,u\right> + \mu \int \max(\varphi-u,0)\, dx,
 \end{align*}
which is an exact penalty for sufficiently large $\mu>0$, see  \cite{friedlander2007exact, mangasarian1985sufficiency}.  
For more details on general theoretical results including regularity of solutions for the obstacle and related free boundary problems, see for example, \cite{caffarelli1998obstacle,caffarelli1976regularity, caffarelli1998obstaclelecture}.

Over the years, there have been many numerical methods for solving various types of obstacle problems. A vast majority of those algorithms use the weak variational inequality characterization to approximate the solutions numerically. For example, in \cite{hoppe1987multigrid}, the authors construct a finite difference scheme based on the variational inequality and use a multigrid algorithm to speed up computations.  In \cite{tai2003rate2} the finite element formulation of the variational inequality is solved using Schwarz domain decomposion method. The convergence of Schwarz domain decomposition for nonlinear variational inequalities is established in \cite{tai2003rate,badea2003convergence}.  Multilevel preconditioners to solve the resulting linear subproblems generated by the finite element discretization was used in \cite{hoppe1994adaptive}. Also, in \cite{weiss2009posteriori} an adaptive finite element method was proposed to solve the variational inequality for one-body contact problems. In another approach \cite{conrad1988approximation}, continuation methods were introduced to approximate the solutions to obstacle problems.  

Alternative approaches use the constrained optimization formulation to construct appropriate algorithms. For example, in \cite{hintermuller2011obstacle}, the constraint is incorporate into the energy via a Langrange multiplier. To solve the resulting saddle point problem a  primal-dual active set algorithm is used.  It should be noted that the existence of solutions to their saddle point problem relies on regularizing the functional, due to the lack of differentiability. A penalty formulation (different from the one  we used here) was proposed in \cite{scholz1984numerical} in order to encourage solutions to satisfy the constraint. However, that method is not exact and requires the penalty parameter to be $\mathcal{O}(h^{-2})$, where $h$ is the grid spacing.

It is also possible to solve the obstacle problem using the complementarity conditions \cite{rodrigues1987obstacle}. With the help of the level set method \cite{osher1988fronts}, the authors of \cite{majava2004level} construct a method to locate the contact set of the obstacle problem. Once the contact set is located, the solution to the obstacle problem can be found directly without the need of the variational inequalities. 

For the two-phase membrane problem, which is a double obstacle problem, the author of \cite{bozorgnia2011numerical} introduces two algorithms. The first method uses finite differences.  The solution is split into two parts, a positive and a negative part, which results in a  coupled system of PDE with matching conditions. In the second method,  a finite element approach is done on a regularized version of the problem so as to avoid the non-differentiablity of the $L^1$-like functions.

Methods and models using $L^1$-like terms are quite common in the fields of imaging science and optimization \cite{candes2006robust,candes2011robust,donoho2006compressed,candes2013phase}. An important aspect of such methods is their efficiency and robustness, which is due in part to the works of \cite{goldstein2009split,cai2009linearized,chambolle2011first}.  Recently, the use of $L^1$-based optimization (and the related low-rank models) has been revived following the early work of \cite{brezis1973ope,brezis1974monotone,brezis1974solutions} and introduced to numerical PDE and computational physics because of its connections to sparsity and compressive sensing. The goal is to construct efficient representation and to create fast solvers for numerical solutions of PDEs. For example,  $L^1$ optimization was used in \cite{schaeffer2013sparse} for multiscale PDEs and in \cite{nelson2013compressive,ozolicnvs2013compressed,ozolinvs2013compressedCPW} for quantum mechanical models. Also, in  \cite{hou2013sparse} an $L^1$ regularized least squares model was constructed to approximate coefficients of a second order ODE whose solutions are associated with intrinsic mode functions. Efficient (sparse) solution representation using low-rank libraries was applied to dynamical systems with bifurcations \cite{brunton2013compressive}. And the use of compressive sensing for fluid dynamics models has seen some recent success, for an example see \cite{bright2013compressive}.

In this work, we connect the $L^1$ based methodology used in imaging science and optimization to obstacle problems and free boundary problems. In particular, we provide some theoretical results on solutions of $L^1$ regularized variational methods to the solutions of obstacle problems with zero obstacle. We derive bounds on the exactness of the penalty formulation as well as construct a fast and simple algorithm to  solve the non-differentiable unconstrained problem. Unlike other penalty methods, we do not require the penalty parameter to go to $\infty$  (for sufficiently smooth obstacles) and no regularization of the penalty is required. 

The outline of this work is as follows. In Section \ref{sec: elliptic case}, we relate $L^1$ optimization to various obstacle problems. We review the obstacle problem formation in Section \ref{sec:obstacle}, and derive a concrete bound for our penalty parameter. In Section \ref{sec:free boundary}, we show how to construct an obstacle problem from a class of free boundary problems. The numerical method and results are described in Sections \ref{sec:numerical} and \ref{sec:simulation}, respectively.  Concluding remarks are given in Section \ref{sec:conclusion}.


 \section{Motivation}\label{sec: elliptic case}
In this section, we motivate the use of $L^1$ based optimization for obstacle problems by establishing a connection between solutions of an $L^1$ penalized variational method and the solutions of obstacle problems with zero obstacle. These problems were considered in \cite{brezis1974solutions, caflisch2013pdes} and can be used for finding compactly supported functions.  Given $f \in L^{2}(\mathbb{R}^d)$, consider the following functional defined for $v \in H^1(\mathbb{R}^d) \cap L^1(\mathbb{R}^d)$,
\begin{equation}
 \mathcal{J}(v) = \int_{\mathbb{R}^d} \frac{1}{2}|\nabla v|^2-fv+\mu|v| \, dx,
 \label{eqn:l1min}
\end{equation}
Then for all test functions $\psi \in H^1(\mathbb{R}^d)\cap L^1(\mathbb{R}^d)$, its unique minimizer $u$,
\begin{equation}
 u = \argmin {}\{ \mathcal{J}(v)\mid v \in H^1(\mathbb{R}^d) \cap L^1(\mathbb{R}^d)\},
 \label{eqn:u0}
 \end{equation}
satisfies the following equation:
\begin{equation}
\int_{\mathbb{R}^d} \nabla u \cdot \nabla \psi - f\psi+\mu p(u) \psi \, dx = 0,
\label{eqn:u}
\end{equation}
where $p(u)$ is an element of the subdifferential of the $L^1$ term in Equation~\eqref{eqn:l1min} and can be identified by (see \cite{caflisch2013pdes}): 
\begin{equation*}
p(u) =
\left\{
\begin{array}{lll}
\text{sign} (u)  & \hbox{ if } & u\neq  0 \\
-f/\mu & \hbox{ if } &  u = 0.
\end{array}\right.
\end{equation*}

We also consider the solution of the following obstacle problem, 
\begin{equation}\label{eqn: obstacle min}
 \bar{u} = \argmin{} \{ \mathcal{J}(v)\mid v \in H^1(\mathbb{R}^d) \cap L^1(\mathbb{R}^d) \ \hbox{ and } \ v \geq 0\}.
  \end{equation}
As a minimizer, $\bar{u}$ satisfies the variational inequality 
\begin{equation}
 \int_{R^d} \nabla \bar{u} \cdot \nabla \psi - f\psi+\mu \psi \, dx \geq 0,
 \label{eqn:ubar}
 \end{equation}
 for all test functions $\psi \in H^1(\mathbb{R}^d)\cap L^1(\mathbb{R}^d)$ with $\psi \geq 0$. One can analogously define $\underline{u}$ as the minimizer of $\mathcal{J}$ over $v \in H^1(\mathbb{R}^d) \cap L^1(\mathbb{R}^d)$ with $v \leq 0$. We will show that the solutions to the variational problems above, $u$, $\bar u$ and $\underline u$, are related. For the rest of the paper, we denote 
  \begin{align*}u_+ :=\max (u,0) \quad \text{and} \quad u_- :=\min (u,0). \end{align*}
\begin{theorem}\label{thm: obstacle l1}
Let $u$ and $\bar u$ be the solutions of Equations~\eqref{eqn:u0} and \eqref{eqn: obstacle min}, respectively, then $\bar{u} = u_+ $. Moreover, if $f\geq 0$  then $\bar u = u$. Similarly, we have $\underline{u} = u_- $ and if $f\leq0$ then $\underline{u} = u$.
\end{theorem}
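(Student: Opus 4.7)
The plan is to verify that $u_+$ solves the obstacle problem \eqref{eqn: obstacle min}, from which $\bar u = u_+$ will follow by uniqueness of the minimizer (a consequence of strict convexity of the quadratic part on the convex admissible set $\{v \geq 0\}$). Admissibility of $u_+$ is immediate: $u_+ \in H^1 \cap L^1$ since $u$ is and both spaces are preserved by the positive-part operation, and $u_+ \geq 0$ by construction.

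The central observation is the lattice identity
\[
\mathcal{J}(v) = \mathcal{J}(v_+) + \mathcal{J}(v_-),
\]
valid for every $v \in H^1(\mathbb{R}^d) \cap L^1(\mathbb{R}^d)$, which follows from $|\nabla v|^2 = |\nabla v_+|^2 + |\nabla v_-|^2$ a.e.\ (because $\nabla v_+$ and $\nabla v_-$ are supported on the disjoint sets $\{v > 0\}$ and $\{v < 0\}$) together with $|v| = v_+ + |v_-|$. In particular $\mathcal{J}(u) = \mathcal{J}(u_+) + \mathcal{J}(u_-)$. Given any admissible $v \geq 0$, I would test the unconstrained minimality of $u$ against the competitor $v + u_-$. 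Setting $w := \min(v, -u_-) \geq 0$, the functions $v' := v - w$ and $z' := u_- + w$ obey $v' \geq 0$, $z' \leq 0$ with $v' \cdot z' = 0$ a.e.\ (disjoint supports), so the lattice identity applied to $v + u_- = v' + z'$ gives $\mathcal{J}(v + u_-) = \mathcal{J}(v') + \mathcal{J}(z')$. Combined with $\mathcal{J}(u) \leq \mathcal{J}(v + u_-)$ and $\mathcal{J}(u) = \mathcal{J}(u_+) + \mathcal{J}(u_-)$, this yields
\[
\mathcal{J}(u_+) - \mathcal{J}(v) \leq [\mathcal{J}(v') - \mathcal{J}(v)] + [\mathcal{J}(z') - \mathcal{J}(u_-)].
\]
A direct quadratic expansion reduces the task to establishing
\[
\int |\nabla w|^2 \, dx - 2\mu \int w \, dx \leq \int \nabla(v - u_-) \cdot \nabla w \, dx,
\]
which I would verify by testing \eqref{eqn:u} against $\psi = w$ and exploiting $p(u) \leq 1$ together with the fact that $w > 0$ only where $-u_- > 0$, i.e.\ on $\{u < 0\}$, where $p(u) = -1$. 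Once this is done, $u_+$ is confirmed as the minimizer of \eqref{eqn: obstacle min}, hence $\bar u = u_+$; the parallel statement $\underline u = u_-$ follows by the symmetric argument with $-u$ and $-f$ in place of $u$ and $f$.

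For the ``moreover'' part: if $f \geq 0$, then on the open set $\{u < 0\}$ the Euler--Lagrange equation \eqref{eqn:u} reduces to $-\Delta u = f + \mu \geq \mu > 0$, making $u$ strictly superharmonic there with zero boundary trace; the minimum principle then forces $u \geq 0$ on $\overline{\{u < 0\}}$, a contradiction unless $|\{u < 0\}| = 0$. Hence $u_- = 0$ and $u = u_+ = \bar u$; the case $f \leq 0$ is symmetric. The main obstacle I anticipate is the careful handling of the gradient cross-term $\int \nabla(v - u_-) \cdot \nabla w \, dx$, which mixes $\nabla v$ and $\nabla u_-$ on the interface $\{v = -u_-\}$ and requires a chain-rule/cutoff argument to deal with the non-differentiability of $|\cdot|$ at the origin and of the min-operation in the definition of $w$.
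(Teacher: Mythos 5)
Your overall strategy---prove directly that $u_+$ minimizes $\mathcal{J}$ over the cone $\{v\geq 0\}$ via the lattice splitting $\mathcal{J}(v)=\mathcal{J}(v_+)+\mathcal{J}(v_-)$ and a comparison of $u$ against the competitor $v+u_-$---is genuinely different from the paper's. The paper never compares energies directly; instead it characterizes both $\bar u$ and $u_+$ as the \emph{infimal nonnegative supersolution} of the variational inequality \eqref{eqn:supersoln2}, showing each is below every such supersolution and that each \emph{is} one, which forces $\bar u = u_+$. Unfortunately, the route you chose has a real gap in the key step.

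The gap is in the claimed inequality
\[
\int|\nabla w|^2\,dx - 2\mu\int w\,dx \;\leq\; \int\nabla(v-u_-)\cdot\nabla w\,dx ,
\]
which you propose to verify by testing \eqref{eqn:u} with $\psi=w$. Carry that test out: since $w>0$ only in $\{u<0\}$ and $\nabla u_+\cdot\nabla w=0$ a.e., equation \eqref{eqn:u} gives the \emph{identity} $\int\nabla u_-\cdot\nabla w\,dx=\int(f+\mu)w\,dx$. Substituting, the inequality you need becomes
\[
\int\nabla w\cdot\nabla(w-v)\,dx \;\leq\; \int(\mu-f)\,w\,dx .
\]
This can fail. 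Take $v$ to be a small nonnegative bump supported in a set where $u<0$ and $v\leq -u_-$, so that $w=v$; the left-hand side vanishes and the inequality reduces to $\int (f-\mu)v\,dx\le 0$, which is \emph{false} whenever $f>\mu$ on a positive-measure subset of $\{u<0\}$. Such configurations do occur: on $\{u<0\}$ the Euler--Lagrange equation reads $-\Delta u=f+\mu$, and one can easily build $1$D examples (a negative dip of $u$ over an interval with $f+\mu$ positive near the endpoints of the dip and very negative in the middle) in which $f>\mu$ on part of $\{u<0\}$. In such a case your chain of inequalities gives only $\mathcal{J}(u_+)-\mathcal{J}(v)\leq \mu\int v\,dx>0$, which does not establish minimality of $u_+$. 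The deeper reason is that the competitor $v+u_-$ does not split cleanly into $(v, u_-)$: its positive and negative parts are $v'=(v+u_-)_+$ and $z'=(v+u_-)_-$, and the slack between $\mathcal{J}(v')+\mathcal{J}(z')$ and $\mathcal{J}(v)+\mathcal{J}(u_-)$ is not sign-definite. You would need a different competitor or, as the paper does, abandon the energy comparison in favor of the ordering argument: test \eqref{eqn:u} with $(u-w)_+$ against any supersolution $w\geq 0$, use that $p(u)=1$ on $\{(u-w)_+\neq 0\}$, and deduce $\nabla(u-w)_+\equiv 0$ hence $u_+\leq w$, then show $u_+$ itself is a supersolution. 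Your ``moreover'' argument via the minimum principle on $\{u<0\}$ is sound and is essentially the substance behind the reference the paper cites for that step.
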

\begin{proof}
Let $w \geq 0 $ be a solution of the variational inequality:
\begin{equation}
\int_{\mathbb{R}^d} \nabla w \cdot \nabla \psi - f\psi+\mu \psi \, dx \geq 0,
\label{eqn:supersoln2}
\end{equation}
for all $\psi \in H^1(\mathbb{R}^d)\cap L^1(\mathbb{R}^d)$ with $\psi \geq 0$. Next, since $(u-w)_+$ is a valid test function for Equation~\eqref{eqn:u} the following holds:
\begin{align*}
0 &= \int_{\mathbb{R}^d} \nabla u \cdot \nabla(u-w)_+-f(u-w)_++\mu p(u) (u-w)_+ \, dx.
\end{align*}
Since  $p(u) = 1$ on $\{ (u-w)_+ \neq 0 \} \subset \{ u>0\}$, we have
\begin{align*}
0 &=  \int_{\mathbb{R}^d} \nabla (u-w) \cdot \nabla(u-w)_++\nabla w \cdot \nabla(u-w)_+-f(u-w)_++\mu (u-w)_+\,  dx .
\end{align*}
Note that $(u-w)_+$ is also a valid test function for Equation~\eqref{eqn:supersoln2}, so the sum of the last three terms in the above equation is non-negative. Therefore
\begin{align*}
0 \geq\int_{\mathbb{R}^d}\nabla (u-w) \cdot \nabla(u-w)_+ \, dx= \int_{\mathbb{R}^d} |\nabla (u-w)_+|^2 \, dx.
\end{align*}
Thus $(u-w)_+ = c$ a.e., for some non-negative constant $c$. Since $(u-w)_+\in L^1(\mathbb{R}^d)$, we have $c=0$, which means $u_+\leq w$. In particular, since $\bar u$ is also a supersolution of \eqref{eqn:supersoln2}, we have $u_+\leq \bar u$.

As for $\bar u$, after noting that for any $\varepsilon $  the perturbation $\bar u -\varepsilon(\bar u -w)_+$ is an admissible function in the minimization \eqref{eqn: obstacle min}, a similar calculation shows that
$$0\leq\left.\frac{d}{d\e}\right|_{\e = 0}\mathcal{J}(\bar{u}-\e(\bar{u}-w)_+) \leq - \int_{\mathbb{R}^d}|\nabla (\bar{u}-w)_+|^2 dx.$$
Using the same argument as before, we conclude that $\bar u\leq w$. Finally, to prove that $u_+\geq \bar u$, we will show that $u_+$ is also a supersolution of \eqref{eqn:supersoln2}. Indeed, since $-f+\mu p(u_+) \leq -f+\mu$ so as long as $\psi \in H^1(\mathbb{R}^d) \cap L^1(\mathbb{R}^d)$ is nonnegative,
$$  \int_{R^d} \nabla u_+ \cdot \nabla \psi +(- f+\mu) \psi \, dx \geq \int_{\mathbb{R}^d} \nabla u_+ \cdot \nabla \psi - f\psi+\mu p(u_+) \psi \, dx = 0. $$
We have proven that $\bar u = u_+$.  In particular, if $f>0$, one can show that  $u$ is non-negative \cite{brezis1974solutions, caflisch2013pdes}. In this case we have $\bar u = u$. This completes the proof.
\end{proof}

\section{Obstacle Problem}\label{sec:obstacle}

In this section, we recall the classical obstacle problem as well as its penalty formulation which contains an $L^1$-like term. It is shown in \cite{friedlander2007exact, mangasarian1985sufficiency} that if the penalty parameter is large enough, the solution of the penalized problem is identical to the solution of the constrained optimization problem (the obstacle problem in our case).  In addition, we provide a lower bound on the size of the penalty parameter of the unconstrained problem as a function of the obstacle.

Consider the problem of minimizing the Dirichlet energy
\begin{equation}\label{eqn:obstacle0}
 \mathcal{J}(v) = \int_\Omega \frac{1}{2} |\nabla v|^2\, dx,
\end{equation}
among all functions $v$ such that $v-g\in H_0^1(\Omega)$ and $v\geq \varphi$, where $\Omega\subset \mathbb{R}^d$ is bounded and $\varphi:\Omega\rightarrow\mathbb{R}$ is a given smooth function, called the obstacle, which has $\varphi \leq g$ on $\partial \Omega$. Its unique minimizer $\bar u$ satisfies the  complementarity problem \cite{rodrigues1987obstacle}:
\begin{equation*}
-\Delta \bar u \geq 0,\quad \bar u\geq \varphi\, ,\quad (-\Delta \bar u)(\bar u -\varphi)=0, \quad \bar u -g\in H_0^1(\Omega).
\end{equation*}
Let $u_\mu$ be the unique minimizer in $H^1_0(\Omega)$ of
\begin{equation}\label{eqn:mumin}
\mathcal{J}_\mu(v) = \int_\Omega \frac{1}{2} |\nabla v |^2+\mu (\varphi-v)_+ \, dx.
 \end{equation}
In  \cite{friedlander2007exact, mangasarian1985sufficiency}, the authors showed that $u_\mu=\bar u$, for $\mu$ large enough and provided a lower bound for $\mu$ which is the $L^\infty$-norm of any dual optimal multiplier of \eqref{eqn:obstacle0}.  That is if $\mu\geq -\Delta v$, for any dual optimal multiplier $v\geq\varphi$ of the optimization \eqref{eqn:obstacle0}, then $u_\mu =\bar u$. Here we provide a concrete lower bound for $\mu$.

\begin{theorem}\label{thrm_para}
Let $u$ and $u_\mu$ be the optimal minimizers of Equations~\eqref{eqn:obstacle0} and \eqref{eqn:mumin}, respectively. Then for any $\mu$ such that $-\Delta \varphi \leq \mu$ we have $u_\mu = u$.
\end{theorem}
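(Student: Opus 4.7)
The plan is to reduce the entire statement to a single inclusion, namely $u_\mu \geq \varphi$ a.e. in $\Omega$. Once this is known, the penalty term $(\varphi - u_\mu)_+$ vanishes identically, so $\mathcal{J}_\mu(u_\mu) = \mathcal{J}(u_\mu)$; also $u_\mu$ becomes admissible in the obstacle problem \eqref{eqn:obstacle0} (with the boundary datum $g$ inherited from its own definition), and since $u \geq \varphi$ we likewise have $\mathcal{J}_\mu(u) = \mathcal{J}(u)$. Minimality of $u_\mu$ for $\mathcal{J}_\mu$ and of $u$ for $\mathcal{J}$ over $\{v\geq\varphi\}$ then chain together to force $\mathcal{J}(u_\mu) = \mathcal{J}(u)$, and uniqueness of the strictly convex obstacle minimizer gives $u_\mu = u$.

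Everything therefore rests on proving $u_\mu \geq \varphi$. I would test the variational characterization of $u_\mu$ against $w := (\varphi - u_\mu)_+$. Since $u_\mu = g \geq \varphi$ on $\partial\Omega$, we have $w \in H_0^1(\Omega)$, so $\pm w$ are admissible perturbations. The functional $\mathcal{J}_\mu$ is convex but non-smooth, so the first-order condition at $u_\mu$ is a pair of one-sided inequalities. Computing the right derivative of $\int(\varphi - v)_+\,dx$ at $v=u_\mu$ in the directions $\pm w$ pointwise, and using crucially that $w \equiv 0$ on the contact set $\{u_\mu = \varphi\}$ (so the subgradient ambiguity there contributes nothing), the two one-sided inequalities collapse to the clean identity
\[
\int_\Omega \nabla u_\mu \cdot \nabla w \, dx \;=\; \mu \int_\Omega w \, dx.
\]

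Armed with this, I would expand $\int_\Omega |\nabla w|^2\,dx = \int_\Omega \nabla w\cdot(\nabla\varphi - \nabla u_\mu)\,dx$, using that $\nabla w = \nabla(\varphi - u_\mu)$ on $\{u_\mu<\varphi\}$ and vanishes elsewhere. Integrating the $\nabla\varphi$ term by parts (legitimate since $w\in H_0^1$ and $\varphi$ is smooth) and substituting the identity above yields
\[
\int_\Omega |\nabla w|^2 \, dx \;=\; \int_\Omega w\,(-\Delta\varphi - \mu)\, dx \;\leq\; 0,
\]
where the hypothesis $-\Delta\varphi \leq \mu$ is used pointwise together with $w \geq 0$. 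Since the left side is also non-negative, $\nabla w = 0$ a.e., and the zero boundary trace of $w$ then forces $w \equiv 0$, i.e. $u_\mu \geq \varphi$.

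The main obstacle I anticipate is the clean extraction of the Euler-Lagrange identity from the non-differentiable penalty. The trick is the choice of test function $w = (\varphi - u_\mu)_+$ itself: it vanishes precisely on the set where the subdifferential of the penalty is multi-valued, so no delicate selection of a subgradient element is needed, and convexity of $\mathcal{J}_\mu$ upgrades the two one-sided variational inequalities into an equality. Everything after that is integration by parts and sign bookkeeping driven by the hypothesis $-\Delta\varphi \leq \mu$.
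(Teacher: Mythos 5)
Your proof is correct, but it takes a genuinely different route from the paper. You derive the exact Euler--Lagrange identity $\int\nabla u_\mu\cdot\nabla w\,dx=\mu\int w\,dx$ for the non-smooth functional $\mathcal{J}_\mu$ by testing against $w=(\varphi-u_\mu)_+$ and exploiting the fact that $w$ vanishes on the contact set $\{u_\mu=\varphi\}$ (so the one-sided directional derivatives in $\pm w$ are negatives of each other and both must be non-negative); you then substitute into $\int|\nabla w|^2\,dx$ and bring in the hypothesis $-\Delta\varphi\le\mu$ at the very end to get $\int|\nabla w|^2\,dx\le 0$. The paper never differentiates $\mathcal{J}_\mu$ at all: it uses a \emph{finite} competitor $v\mapsto v+(\varphi-v)_+$ and shows, for any admissible $v$, the energy drop
\[
\mathcal{J}_\mu\bigl(v+(\varphi-v)_+\bigr)\;\le\;\mathcal{J}_\mu(v)-\tfrac{1}{2}\int_\Omega|\nabla(\varphi-v)_+|^2\,dx,
\]
where $-\Delta\varphi\le\mu$ enters at the integration-by-parts step inside this inequality; applying it at $v=u_\mu$ and invoking minimality gives $(\varphi-u_\mu)_+=0$ immediately. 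In effect the two arguments use the ingredients (minimality of $u_\mu$, subharmonicity bound on $\varphi$) in opposite order, and both hinge on the same key choice of test object $(\varphi-u_\mu)_+$. The paper's version buys you a slightly more self-contained argument, since it avoids justifying the first-order optimality condition for a non-differentiable functional, and it establishes an energy-decrease inequality valid for arbitrary $v$, which has algorithmic content beyond the theorem; your version is the more standard variational route and isolates the subdifferential issue cleanly through the cancellation on the contact set. The conclusion from $u_\mu\ge\varphi$ onward is identical in both treatments.
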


\begin{proof}
For any $v\in H^1_0(\Omega)$, define $w = v+(\varphi-v)_+$. Then $w$ is a valid test function for \eqref{eqn:obstacle0}, i.e., $w \geq \varphi$. Compute
 \begin{align*}
 \mathcal{J}_\mu(w) &= \int_\Omega \frac{1}{2}|\nabla v|^2+\nabla(\varphi-v)_+\cdot \nabla v +\frac{1}{2}|\nabla(\varphi-v)_+|^2 \,  dx \\
 & = \int_\Omega \frac{1}{2}|\nabla v|^2+\nabla(\varphi-v)_+\cdot \nabla\varphi  -\frac{1}{2}|\nabla(\varphi-v)_+|^2\, dx \\
 & \leq \int_\Omega \frac{1}{2}|\nabla v|^2+\mu(\varphi-v)_+ -\int_{\Omega}\frac{1}{2}|\nabla(\varphi-v)_+|^2 \, dx  \\
 & = \mathcal{J}_\mu(v)-\int_{\Omega}\frac{1}{2}|\nabla(\varphi-v)_+|^2\, dx.
 \end{align*}
The inequality in the third line holds since $-\Delta \varphi \leq \mu$ in the weak sense. Therefore, $\mathcal{J}_\mu(v+(\varphi -v)_+) < \mathcal{J}_\mu(v)$ unless $\nabla(\varphi-v)_+$ is zero, which implies  $(\varphi-v)_+=0$ since $(\varphi-v)_+\in H^1_0(\Omega)$.  In particular, we have
 \begin{equation*}
 \mathcal{J}_\mu(u_\mu + (\varphi -u_\mu)_+)\leq \mathcal{J}_\mu(u_\mu).
 \end{equation*}
Since $u_\mu$ is the uniqueness minimizer of \eqref{eqn:mumin},  $(\varphi-u_\mu)_+ =0$ which means $u_\mu\geq \varphi$ is a valid test function for \eqref{eqn:obstacle0}. In addition, we observe
  $$\mathcal{J}(u_\mu) = \mathcal{J}_\mu(u_\mu) \leq \mathcal{J}_\mu(u) =\mathcal{J}(u).$$
 Since $u$ is the unique minimizer of $\mathcal{J}$, we conclude that $u = u_\mu$.

 \end{proof}
\begin{remark} It is worth noting that in the numerical experiments provided in this work, the smaller the value of $\mu$ is, the faster the iterative scheme converges to the steady state. Therefore, an explicit lower bound on $\mu$ greatly improves the convergence rate of the method. 
\end{remark}
\section{Free boundary problem}\label{sec:free boundary}
In this section, we show how to put a class of free boundary problems into a form where the methodology of Sections \ref{sec: elliptic case} and \ref{sec:obstacle} can be directly applied.  We emphasize that for these problems our primary interest is in the location of the free boundary $\partial\{u>0\}$ as opposed to the solution itself.  For a concrete example, we will focus our attention on the Hele-Shaw model.

\subsection{Turning a Class of Free Boundary into an Obstacle}
\label{sec:obstacleExact}
 Consider the solution of the following free boundary problem in $\mathbb{R}^d$:
\begin{equation}
\left\{
\begin{array}{lll}
-\Delta u = f-\gamma & \hbox { in } & \{ u >0 \} \\
u = |\nabla u| = 0 & \hbox { on } & \partial \{ u>0\},
\end{array}\right.
\label{eqn:fbp0}
\end{equation}
with some given source function $f$ and constant $\gamma$. In this form, we can see the connection to an $L^1$-minimization problem (Equation~\eqref{eqn:l1min} with $\gamma = \mu$). In general, this can be difficult to solve numerically because of the free boundary $\partial\{u>0\}$. We will show that our method naturally treats the free boundary conditions thereby avoiding any difficulty in directly tracking or approximating it. The details are described below. 

First let us define the obstacle:
$$ \varphi := -\tfrac{\gamma}{2d} |x|^2- (-\Delta)^{-1} f(x).$$
Then  the function $w = u+\varphi$ will be the least super harmonic majorant of $\varphi$ in $\mathbb{R}^d$, that is,  it solves the free boundary problem:
\begin{equation}
\left\{ 
\begin{array}{lll}
-\Delta w = 0 & \hbox { in } & \{ w > \varphi \} \\
 \nabla w = \nabla \varphi & \hbox { on } & \partial \{ w>\varphi\}.
\end{array}\right.
\label{obstacleExact}
\end{equation}
By transforming the PDE \eqref{eqn:fbp0}, we replace the source term with an obstacle. Indeed, the solution $w$ of Equation~\eqref{obstacleExact} is the unique minimizer of the following optimization problem:
\begin{equation}
w =\argmin{v\in H^1(\mathbb{R}^d)} \int \frac{1}{2}|\nabla v|^2 + \mu (\varphi-v)_+\, dx,
\label{eqn:wtran}
 \end{equation}
 for some parameter $\mu$.  Therefore, by finding the solution to the unconstrained optimization problem (Equation \eqref{eqn:wtran}), we can locate the free boundary to the original problem directly.
\subsection{Hele-Shaw}\label{sec:Hele-Shaw}
Let us recall the classical Hele-Shaw problem with a free boundary. Let $K \subset \mathbb{R}^d$ be a compact set and $\Omega_0 \supset K$ be open and bounded. Fluid initially occupies $\Omega_0$ and is injected at a constant rate 1 per unit length through the surface $K$. The fluid expands and occupies the region $\Omega_t$ with the free boundary $\Gamma_t$. Let $p(x,t):\mathbb{R}^d\times [0,\infty)\rightarrow\mathbb{R} $ be the pressure of the fluid. For simplicity we consider a slight variant of the Hele-Shaw model where $p(x,t)$ instead of its normal derivative is equal to $1$ on $\partial K$, see \cite{jerison2005one}.  Then the time integral of $p$, $u(x,t) =  \int_0^t p(x,\tau)d\tau$ formally satisfies (see also \cite{elliott1981variational, gustafsson1985applications}),
\begin{equation}\label{eqn: obs Hele-Shaw}
\left\{
\begin{array}{lll}
-\Delta u = \chi_{\Omega_0}-1 & \hbox{ in } & \Omega_t(u) \setminus K \\
u = t & \hbox{ on } & K \\
u = |\nabla u| = 0 & \hbox{ on } & \Gamma_t(u).
\end{array}\right.
\end{equation}
Note that $\Omega_t(u) = \{u>0\}$. We are free to solve Equation~\eqref{eqn: obs Hele-Shaw} only since the free boundary is the same as the free boundary of the pressure $p$. Here we consider the stable flow examples. For an example of a numerical method to solve the unstable Hele-Shaw flow (with the known fingering effect), see \cite{hou1997hybrid}. 

Let us define the obstacle:
$$ \varphi_0 := -\tfrac{1}{2d}|x|^2-(-\Delta)^{-1}\chi_{\Omega_0}.$$
Similar to Section \ref{sec:obstacleExact}, the function $w = u + \varphi_0$ solves
$$w = \argmin{v\in \mathcal{V}_t} \int_{\mathbb{R}^d \setminus K} \frac{1}{2}|\nabla v|^2+\mu(\varphi_0 - v)_+ \, dx, $$
where the admissible set is defined as follows
$$ \mathcal{V}_t = \{ (v-\varphi_0) \in (H^1 \cap L^1)(\mathbb{R}^d \setminus K) :  v = \varphi_0+t  \ \hbox{ on } \ \partial K\}.$$
For computational purposes, it is desirable to avoid solving a minimization problem in a possibly complicated domain $\mathbb{R}^d \setminus K$. So the boundary condition can be included as a secondary obstacle. To do so, we define the new obstacle,
\begin{equation*}  \varphi = \varphi_0+ t \chi_K,\label{eqn:varphi}\end{equation*}
with the associated double penalized energy,
\begin{equation}
\mathcal{J}_{\gamma}(v) = \int_{\mathbb{R}^d } \frac{1}{2}|\nabla v|^2+\gamma_1(\varphi - v)_+-\gamma_2 (t\chi_K - v)_- \, dx,
\label{eqn:HSgeneral}
\end{equation}
for some parameters $\gamma_1$ and $\gamma_2$.

Since $\varphi$ is not smooth, the argument of the previous section, namely that $-\Delta \varphi - \gamma_1$ is subharmonic for $\gamma_1$ sufficiently large, is not directly applied. However, we can build a smooth approximation for the obstacle using mollifier.   On the other hand (at least heuristically), when one minimizes a discretization of $\mathcal{J}_\gamma$ with grid spacing $h$, the minimizer of the discretization is as good an approximation to $\mathcal{J}_\gamma$ as it is to,
\begin{equation*}
\mathcal{J}^h_{\gamma}(v) = \int_{\mathbb{R}^d} |\nabla v|^2+\mu\max\{t \rho_h * \chi_K+\varphi_0 - v,0\}  \, dx,
\label{eqn:obsMollifier}
\end{equation*}
where $\rho_h = h^{-d}\rho(h^{-1}x)$ with $\rho\in C^\infty(\mathbb{R}^d)$ being a standard mollifier.  Now one can estimate:
$$ \|\Delta \rho_h * \chi_K\|_{L^\infty} \leq \|\Delta \rho_h\|_{L^1}\|\chi_K\|_{L^\infty} \leq h^{-2}\|\Delta \rho\|_{L^1(\mathbb{R}^d)}.$$
In particular by the result of Section \ref{sec:obstacle}, for the mollified functional $\mathcal{J}^h_\gamma$ as long as
$$\mu \geq th^{-2}\|\Delta \rho\|_{L^1(\mathbb{R}^d)}+1,$$
 the global minimizer  solves the obstacle problem with $t \rho_h * \chi_K+\varphi_0 - s$ as the obstacle.
\begin{remark}
The solution of Equation \eqref{eqn: obs Hele-Shaw} can also be viewed as the minimizer of
$$ \mathcal{J}(v) = \int_{\mathbb{R}^d \setminus K}\frac{1}{2} |\nabla v |^2-\chi_{\Omega_0}v+|v| \, dx, $$
over the admissible set,
$$ \mathcal{V}_t = \{ v \in (H^1 \cap L^1)(\mathbb{R}^d \setminus K):  v = t  \ \hbox{ on } \ \partial K\}.$$
Let us call
$$ \tilde{u}(\cdot,t) = \argmin{} \{ \mathcal{J}(v) : v \in \mathcal{V}_t \},$$
then as in the Section \ref{sec: elliptic case}, $\tilde{u}$ will be the same as the solution of the obstacle problem \eqref{eqn: obs Hele-Shaw} obtained as the infimal non-negative supersolution. Simulations based on this observation yield similar results to those of the penalized energy.
\end{remark}
\section{Numerical Method}\label{sec:numerical}
For the numerical method, we employ the energy minimization formulation and discretize the energy using a uniform fixed grid. The energies are convex, so we construct an algorithm via \cite{goldstein2009split} to decouple the problem into an explicit part and a strictly convex part. In the explicit part, the optimal value can be computed directly using shrink-like operators. For the strictly convex part, we can use either a conjugate gradient method or an accelerated gradient descent method to quickly solve the subproblem. The detailed algorithm and its construction are described here. Note that for each problem, there could be slight variations in the algorithm, which we will explain in each subsection.

Consider the following discrete energy (where $\nabla_h u$ is a discretization of the gradient of $u$)
\begin{equation*}
 \min_{u\geq \varphi}  F(\nabla_h u),
  \end{equation*}
 where $F$ is a convex functional and $\varphi$ is a given function. To solve this problem, we first convert it into an unconstrained problem by using the penalty method:
 \begin{equation*}
 \min_{u} F(\nabla_h u) + \mu \,|(\varphi-u)_+|,
 \end{equation*}
for some parameter $\mu>0$. Here $|\cdot |$ denotes the $L^1$-norm.
Since $(\cdot)_+$ is not differentiable, we construct an equivalent minimization problem using \cite{goldstein2009split}. We first introduce an auxiliary variable $v=\varphi-u$ then apply the Bregman iteration:
\begin{equation*}
 \begin{cases}
 (u^{k+1}, v^{k+1}) &= \argmin{u,v} \, F(\nabla_h u) + \mu\,|v_+| +\frac{\lambda}{2} ||v-\varphi+u+b^k||_2^2,\\
 \quad\quad b^{k+1} & =  b^k  + u^{k+1} + v^{k+1} -\varphi.
 \end{cases}
 \end{equation*}
 Now we can efficiently solve the minimization by splitting it into two subproblems with respect to $u$ and $v$:
 \begin{equation*}
 \begin{cases}
 \text{Step 1:}\quad u^{n+1} & =\argmin{u}\,\mathcal{F}(u) = F(\nabla_h u) +  \frac{\lambda}{2} ||v^n-\varphi+u+b^n||_2^2,\\
 \text{Step 2:}\quad v^{n+1} & = \argmin{v}\, \mu\,|v_+| +\frac{\lambda}{2} ||v-\varphi+u^{n+1}+b^n||_2^2.
 \end{cases}
 \end{equation*}
  The solution for $v$ is given explicitly:
 \begin{align*}
v= S_{+} \left(\varphi -u^{n+1} -b^n, \ \frac{\mu}{\lambda}\right),
 \end{align*}
 where $S_{+} (z, c) := (z-c)$ if $z>c$, $z$ if $z<0$, and $0$ otherwise. 

 To solve the $u$ subproblem, we consider two cases for the first variation, $G$, of $F$. If $G$ is linear, for example taking $F(\nabla u)=\dfrac{1}{2}\int\nabla u \cdot A \nabla u\,dx$ and $A$ is positive semi-definite, then the first variation is:
 \begin{align*}
 ( \lambda I - \nabla \cdot A\nabla) u=\lambda (\varphi-v^n-b^n),
 \end{align*}
 which can be solved by using the conjugate gradient method. In the case where $G$ is non-linear, for example taking $F(\nabla u)=\int \sqrt{1+ |\nabla  u|^2}\, dx$, we leverage the strict convexity of the functional to quickly solve the substep by using Nesterov's acceleration method \cite{nesterov2004introductory}. The resulting scheme for $u$ is as follows:
 \begin{align}
 \begin{cases}
 w^{k} &= U^{k} + \frac{\sqrt{L}-\sqrt{\lambda}}{\sqrt{L}+\sqrt{\lambda}} \ (U^{k}-U^{k-1})\\
 U^{k+1} &= w^{k} -\tau (G(w^{k})+ \lambda (v^n-\varphi+w^k+b^n) ),
 \end{cases}
 \label{nest_alg}
 \end{align}
 where $\tau>0$ is a psuedo-time step, $L$ is the Lipschitz norm of $\mathcal{F}$, and $w$ is an auxiliary variable. This scheme has the following convergence bound:
 $$\mathcal{F}(U^k)-\mathcal{F}(U^*) \leq 2 \left(1-\sqrt{\frac{\lambda}{L}}\right)^k \left(\mathcal{F}(U^0)-\mathcal{F}(U^*)\right),$$
 where $u^n=U^0$, $u^{n+1}=U^*$, and $U^*$ is the steady state solution of Equation~\eqref{nest_alg}. Both algorithms are summarized below.

\medskip

\noindent\fbox{%
\begin{minipage}{\dimexpr\linewidth-2\fboxsep-2\fboxrule\relax}
\begin{algorithmic}
\State \underline{\textbf{Algorithm (Linear)}}
\State Given: $u^0,b^0, tol $ and parameters $\lambda,\mu$
\medskip
\While{$||u^{n}-u^{n-1}||_{\infty}>tol$}
\medskip
\State $u^{n+1}=( I - \lambda^{-1 }G)^{-1} (\varphi-v^n-b^n)$
\medskip
\State $v^{n+1}= S_{+} \left(\varphi-u^{n+1}-b^{n},\, \dfrac{\mu}{\lambda}\right)$
\medskip
\State $b^{n+1}=b^n+u^{n+1}+v^{n+1}-\varphi$
\medskip
 \EndWhile
\end{algorithmic}
\end{minipage}%
}

\medskip

\noindent\fbox{%
\begin{minipage}{\dimexpr\linewidth-2\fboxsep-2\fboxrule\relax}
\begin{algorithmic}
\State \underline{\textbf{Algorithm (Non-linear)}}
\State Given: $u^0,b^0, tol $ and parameters $\lambda,\mu$
\medskip
\While{$||u^{n}-u^{n-1}||_{\infty}>tol$}
\medskip
\State $U^{0}=u^{n}$
\medskip
\While{$||U^{k}-U^{k-1}||_{\infty}>tol$}
\medskip
\State $w^{k} = U^{k} + \frac{\sqrt{L}-\sqrt{\lambda}}{\sqrt{L}+\sqrt{\lambda}} \ (U^{k}-U^{k-1})$
\medskip
 \State $U^{k+1} = w^{k} -\tau (G(w^{k})+ \lambda (v^n-\varphi+w^k+b^n) )$
 \medskip
\EndWhile
\medskip
\State $u^{n+1}=U^{*}$
\medskip
\State $v^{n+1}= S_{+} \left(\varphi-u^{n+1}-b^{n},\,\dfrac{\mu}{\lambda}\right)$
\medskip
\State $b^{n+1}=b^n + u^{n+1} + v^{n+1}-\varphi$
\medskip
 \EndWhile
\end{algorithmic}
\end{minipage}%
}

\bigskip

\subsection{Obstacle Problem}\label{sec:obsnum}
Given an obstacle $\varphi : \mathbb{R}^d \rightarrow\mathbb{R}$ and $\mu$ satisfying the condition from Theorem \ref{thrm_para}, we solve the following obstacle problem:
\begin{equation*}
\min_u\int\frac{1}{2}|\nabla u|^2 + \mu (\varphi -u)_+ \, dx.
\end{equation*}
The corresponding discrete problem is given by:
\begin{equation*}
 \min_u  \sum_j \frac{1}{2} |\nabla_h u_j |^2+\mu (\varphi_j-u_j)_+\,  ,
 \end{equation*}
where $u_j=u(x_j)$ and $h>0$ is the uniform grid spacing. Since the functional is quadratic, the Euler-Lagrange equation for the subproblem in terms of $u$ satisfies a Poisson equation:
  \begin{align}
 ( \lambda I- \Delta_h) u=\lambda (\varphi-v+b).
 \label{poisson}
 \end{align}
To approximate the solution of the linear system, we use a few iterations of the conjugate gradient method. It was noted in \cite{goldstein2009split} that full convergence is not necessary within the main iterations, thus we are not required to solve Equation \eqref{poisson} exactly.

\subsection{Two-Phase Membrane Problem}\label{sec:twophasenum}
Consider the following optimization problem arising from finding the equilibrium state of a thin film:
\begin{equation*}
\min_u\int \frac{1}{2}|\nabla u|^2 + \mu_1 u_+ - \mu_2u_- \, dx,
\end{equation*}
for some positive and continuous Lipschitz functions $\mu_1(x)$ and $\mu_2(x)$. The corresponding Euler-Lagrange equation  is:
 \begin{equation*}
 \Delta u = \mu_1 \chi_{\{u>0\}} - \mu_2\chi_{\{u<0\}}.
 \end{equation*}
The regularity of this problem was studied in \cite{shahgholian2007two,petrosyan2012regularity}. Here we are concerned with the numerical approximation of this problem as well as computing the zero level set.

The corresponding discrete minimization problem is given by:
\begin{equation*}
 \min_u  \sum_j \frac{1}{2} |\nabla_h u_j |^2+\mu_1 (u_j)_+-\mu_2(u_j)_-\,.
 \end{equation*}
 Now one can apply the split Bregman method by introducing two auxiliary variables $v_1 = u_+$ and $v_2 = u_-$. However, the algorithm can be further simplified by using the following relations: 
 $$u_+ = \frac{u + |u|}{2},\quad\text{and}\quad u_- = \frac{u - |u|}{2}.$$ 
Now we can rewrite the problem as:
\begin{equation*}
 \min_u  \sum_j \frac{1}{2} |\nabla_h u_j |^2+\alpha u_j+\beta |u_j|,
 \end{equation*}
 where $\alpha = \frac{\mu_1 - \mu_2}{2}$ and $\beta = \frac{\mu_1 + \mu_2}{2}$. In this form, we have a slightly different numerical scheme. As before, the splitting leads to:
 \begin{equation*}
 \min_u  \sum_j \frac{1}{2} |\nabla_h u_j |^2+\alpha u_j+\beta |v_j| + \frac{\lambda}{2} |v_j-u_j-b_j|^2,
 \end{equation*}
and iterative scheme is written as follows
 \begin{align*}
 u^{n+1}&=( \lambda I - \Delta_h)^{-1} ( \lambda (v^n-b^n) - \alpha)\\
 v^{n+1}&=S \left(u^{n+1} + b^n,\, \frac{\beta}{\lambda} \right)\\
b^{n+1}&=b^n + u^{n+1} -v^{n+1},
 \end{align*}
 where the shrink function is defined as $S (z, c) := (|z|-c)_+ \, \text{sign}(z)$.

\subsection{Hele-Shaw}\label{sec:heleshawnum}
As described in Section \ref{sec:Hele-Shaw}, we minimize the obstacle problem transformation of the Hele-Shaw flow:
\begin{equation*}
\min  \int \frac{1}{2}|\nabla u|^2+\gamma_1(\varphi - u)_+-\gamma_2 (t\chi_K - u)_- \, dx,
\end{equation*}
 with $\varphi$ given by Equation~\eqref{eqn:varphi}. The corresponding discretization problem is:
\begin{equation*}
\min_u \sum_j \frac{1}{2} |\nabla_h u_j|^2+\gamma_1(\varphi_j - u_j)_+ - \gamma_2 (t\,\chi_{K,j} - u_j)_-\, .
\end{equation*}
Once again, we construct an equivalent minimization problem by introducing two auxiliary variables $v_1 = \varphi -u$ and $v_2 = u-t\chi_K$. For convenience, we drop the subscript $j$ in all terms:
\begin{equation*}
\min_{u,v_1,v_2,b_1,b_2} \sum_j \frac{1}{2} |\nabla_h u|^2+\gamma_1(v_{1})_+ + \gamma_2 (v_{2})_+ + \frac{\lambda_1}{2}(v_{1} -\varphi + u +b_{1} )^2 + \frac{\lambda_2}{2}(v_{2} - u+ t\chi_{K} + b_{2})^2.
\end{equation*}
The iterative scheme is written as follows:
\begin{align*}
u^{n+1}&= \left( (\lambda_1 + \lambda_2) I - \Delta_h \right )^{-1} \left ( \lambda_1 (\varphi -v_1^n-b_1^n) + \lambda_2(v_2 + t\chi_K + b_2)\right)\\
 v_1^{n+1}&=S_+\left(\varphi - u^{n+1} - b_1^n,\, \frac{\gamma_1}{\lambda_1} \right)\\
  v_2^{n+1}&=S_+\left(u^{n+1} - t\chi_K-  b_2^n,\, \frac{\gamma_2}{\lambda_2} \right)\\
b_1^{n+1}&=b_1^n + v_1^{n+1} - \varphi + u^{n+1}\\
b_2^{n+1}&=b_2^n + v_2^{n+1} - u^{n+1} + t\chi_K.
\end{align*}
Each substep is either a linear system of equations or an explicit update using the shrink-like operators, making it easy to solve.

\section{Computational Simulations}\label{sec:simulation}
In this section, we apply the methods from Section \ref{sec:numerical} to various examples. The iterative schemes stop when the difference between two consecutive iterations in the $L^\infty$ norm is less than a set tolerance, $tol$. We will specify the tolerance parameter for each problem. 
\subsection{Obstacle problem}
For our first examples, we show some numerical results for the minimization problem:
\begin{equation*}
 \min_u\int\frac{1}{2}|\nabla u|^2 + \mu (\varphi -u)_+ \, dx,
  \end{equation*}
with different types of obstacles. In particular, consider the following 1D obstacles:
\begin{equation}
\varphi_1 (x) :=\begin{cases}
				100x^2 &\quad\text{for}\quad 0\leq x\leq 0.25\\
				100x(1-x) -12.5 & \quad\text{for}\quad 0.25\leq x\leq 0.5\\
				 \varphi_1(1-x) &\quad\text{for}\quad 0.5\leq x\leq 1.0,
	             \end{cases}
	             \label{obstacle1d11}
	 \end{equation}
	 and
	\begin{equation}
\varphi_2 (x) :=\begin{cases}
		        10\sin(2\pi x) & \quad\text{for}\quad 0\leq x\leq 0.25\\
		        5\cos(\pi(4x-1)) +5 &\quad\text{for}\quad 0.25\leq x\leq 0.5\\
		        \varphi_2(1-x) &\quad\text{for}\quad 0.5\leq x\leq 1.0.
	        \end{cases}
	        \label{obstacle1d12}
\end{equation}
In both cases, the parameter $\mu$ is determined discretely (see Theorem \ref{thrm_para}) and $u$ is initialized using the obstacle, \textit{i.e.} $u^0 =\varphi$. The results are shown in Fig. \ref{Obstacle1d}. In both cases the numerical solutions are linear away from their corresponding obstacles, which agrees with the analytic solutions:

\begin{figure}[t!]
\centering
\includegraphics[width = 2.3 in]{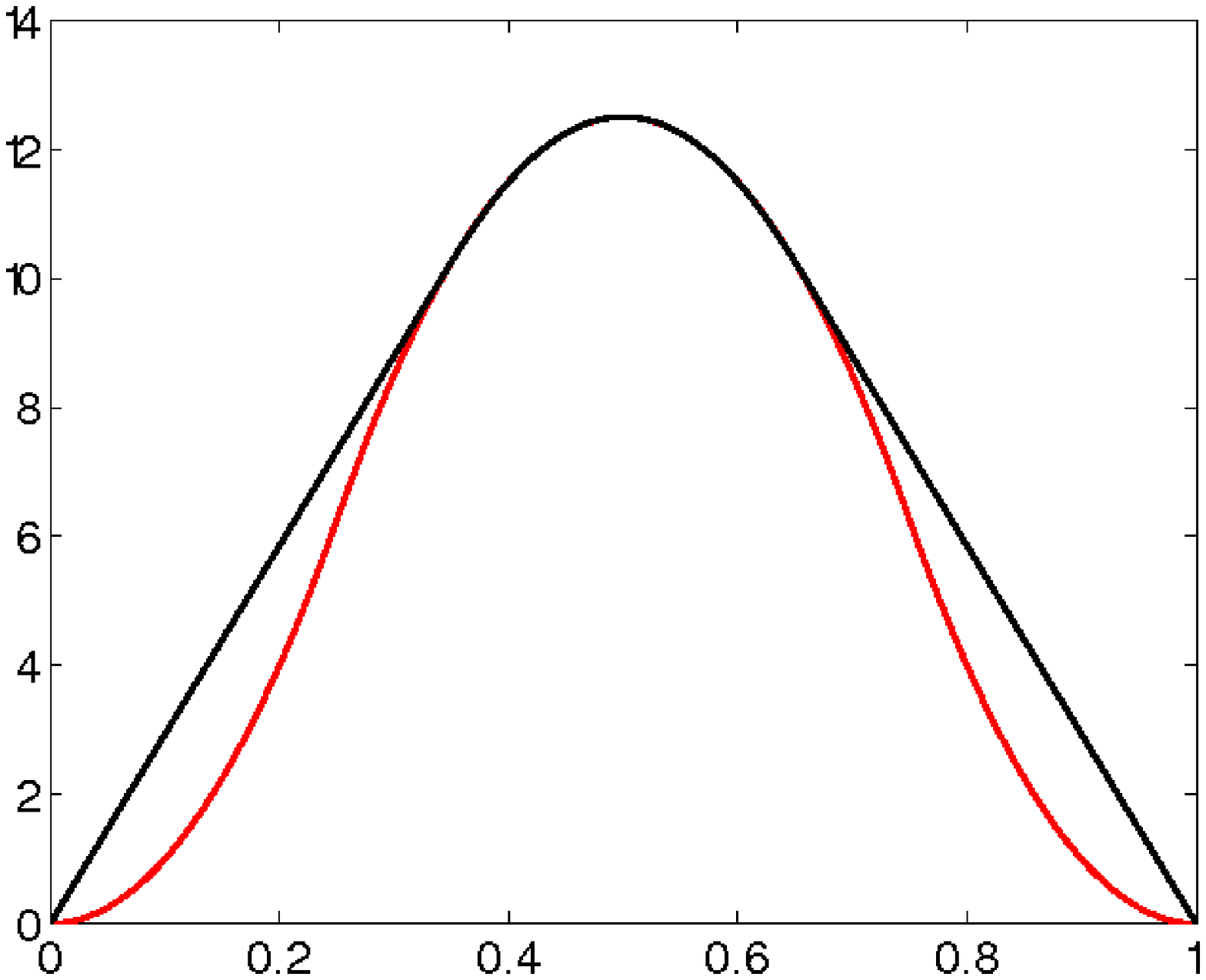}\hspace{0.3cm}
\includegraphics[width = 2.3 in]{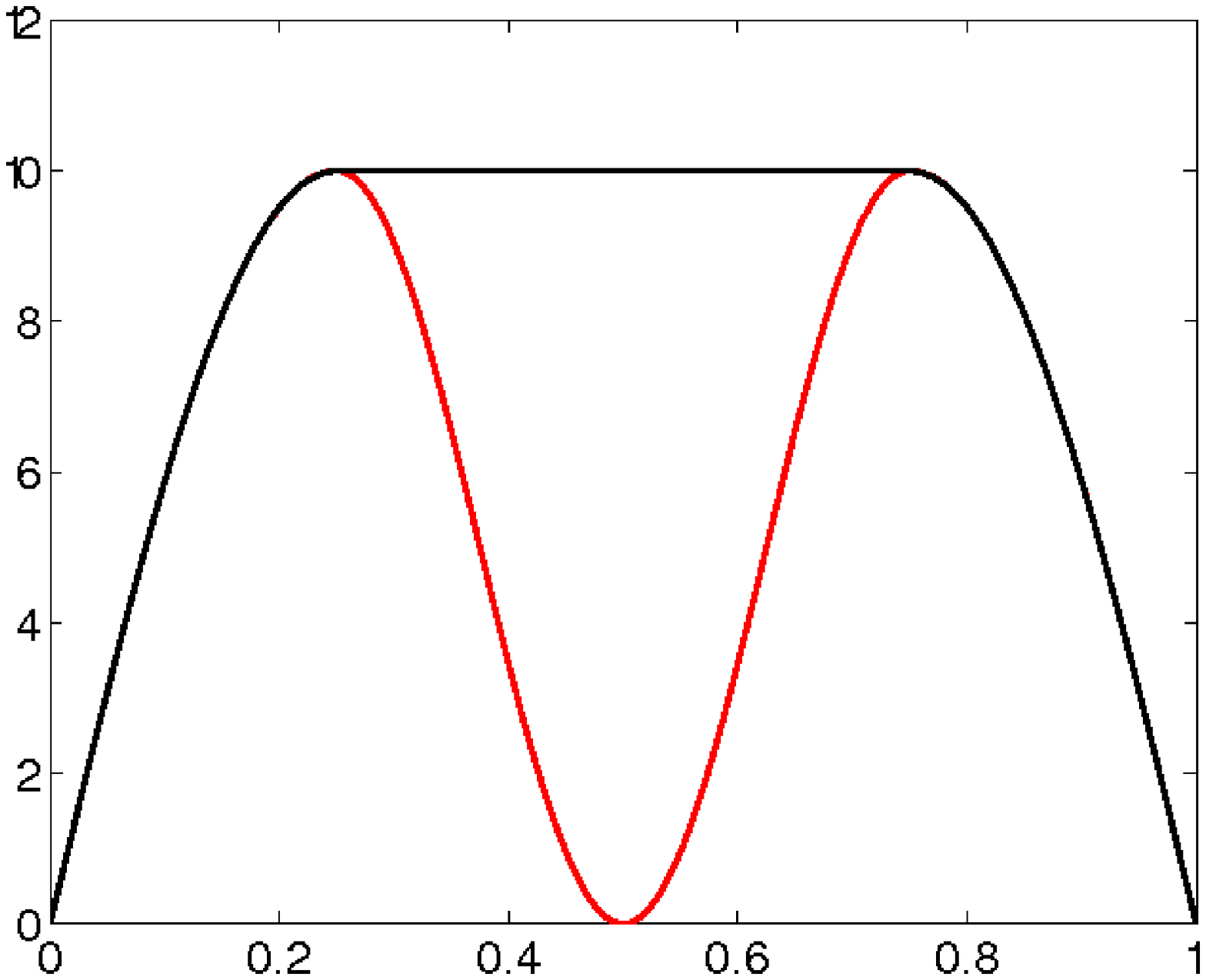}
\caption{The red curves are the obstacles and the black ones are our numerical solutions associated with Equations~\eqref{obstacle1d11} (left) and \eqref{obstacle1d12} (right) after 50 iterations. The grid size is 256,  the parameters are $(\mu,\lambda) = (300,45)$ and $(2.5\times 10^4,250)$, respectively.}
\label{Obstacle1d}
\end{figure}
\begin{figure}[t!]
\centering
\includegraphics[width = 2.3 in]{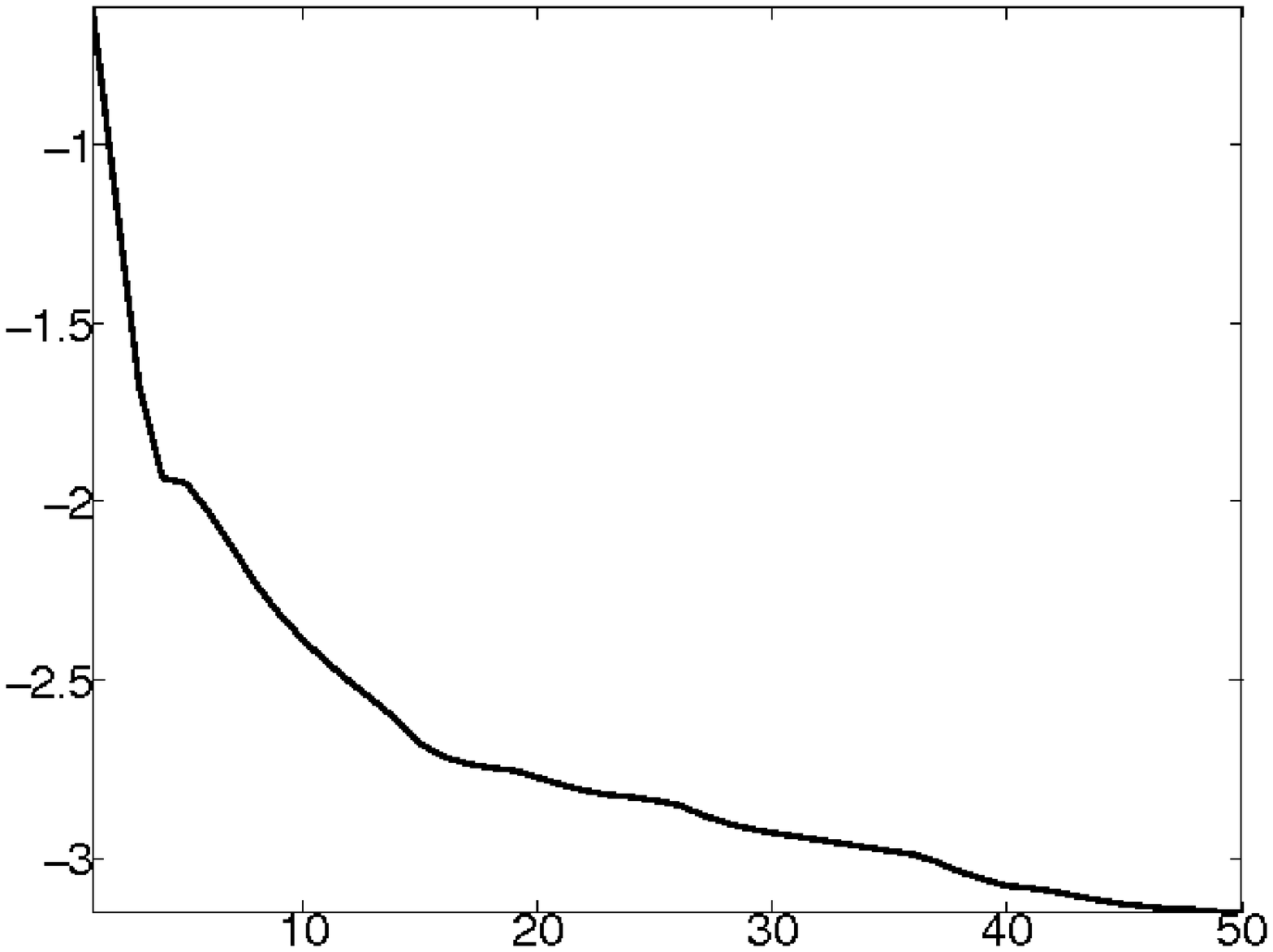}\hspace{0.3cm}
\includegraphics[width = 2.3 in]{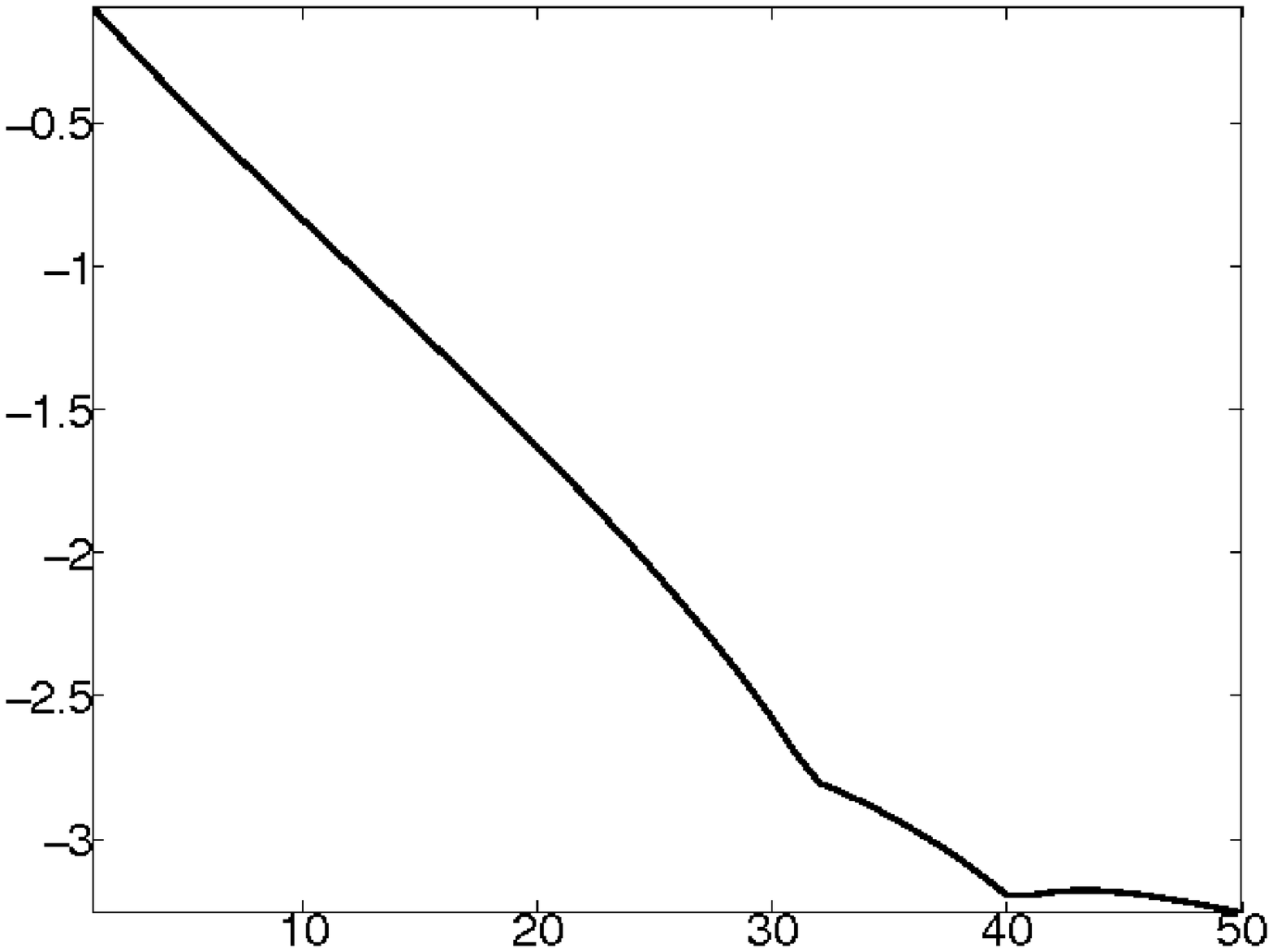}
\caption{The plots correspond to the log relative errors between our numerical solution (from Fig. \ref{Obstacle1d}) and the analytic solution versus number of iterations. The error is measured in the $L^\infty$ norm.}
\label{Obstacle1derror}
\end{figure}

\begin{equation*}
u_{1,exact}(x) = \begin{cases}
(100-50\sqrt{2}) x & \quad \text{for}\quad 0\leq x\leq \frac{1}{2\sqrt{2}}\\
100x(1-x) - 12.5 &\quad\text{for}\quad \frac{1}{2\sqrt{2}} \leq x\leq 0.5\\
u_{1,exact}(1-x) &\quad\text{for}\quad 0.5\leq x\leq 1.0,
\end{cases}
\end{equation*}
and
\begin{equation*}
u_{2,exact}(x) =\begin{cases}
10\sin (2\pi x) &\quad\text{for}\quad 0\leq x\leq 0.25\\
10 &\quad\text{for}\quad 0.25\leq x\leq 0.5\\
u_{2,exact}(1-x)&\quad\text{for} \quad 0.5\leq x\leq 1.0.
\end{cases}
\end{equation*}
The errors between the analytic solutions and the numerical solutions versus the number of iterations associated to obstacle problems \eqref{obstacle1d11} and \eqref{obstacle1d12} are shown in Fig. \ref{Obstacle1derror}. Notice that the numerical scheme has nearly exponential error decay in the beginning.

 Next, we consider a 2D problem on the domain $\Omega = [-2,2]\times [-2,2]$ with the following obstacle:
\begin{equation}
\varphi(x,y) =\begin{cases}
\sqrt{1-x^2-y^2},&\quad\text{for}\quad x^2+y^2\leq 1\\
-1,&\quad\text{otherwise.}
\label{obstacle2dexact}
\end{cases}
\end{equation}
Since the obstacle is radial symmetric, the analytical solution can be solved directly:
\begin{equation*}
u(x,y) = \begin{cases}
\sqrt{1-x^2-y^2},&\quad\text{for}\quad r\leq r^*\\
-(r^*)^2\log (r/2)/\sqrt{1-(r*)^2},&\quad\text{for}\quad r\geq r^*,
\end{cases}
\end{equation*}
where $r = \sqrt{x^2+y^2}$, and $r^*$ is the solution of
\begin{equation*}
(r^*)^2(1-\log (r^*/2)) =1.
\end{equation*}
Our numerical solution and the difference with the analytic solution are presented in Fig. \ref{fig:obstacle2dexact}. For comparison see \cite{majava2004level}.  We can see that the error is concentrated along the contact set, where the function is no longer $C^2$, and is relatively small everywhere else.
\begin{figure}[t!]
\includegraphics[width = 2.5 in]{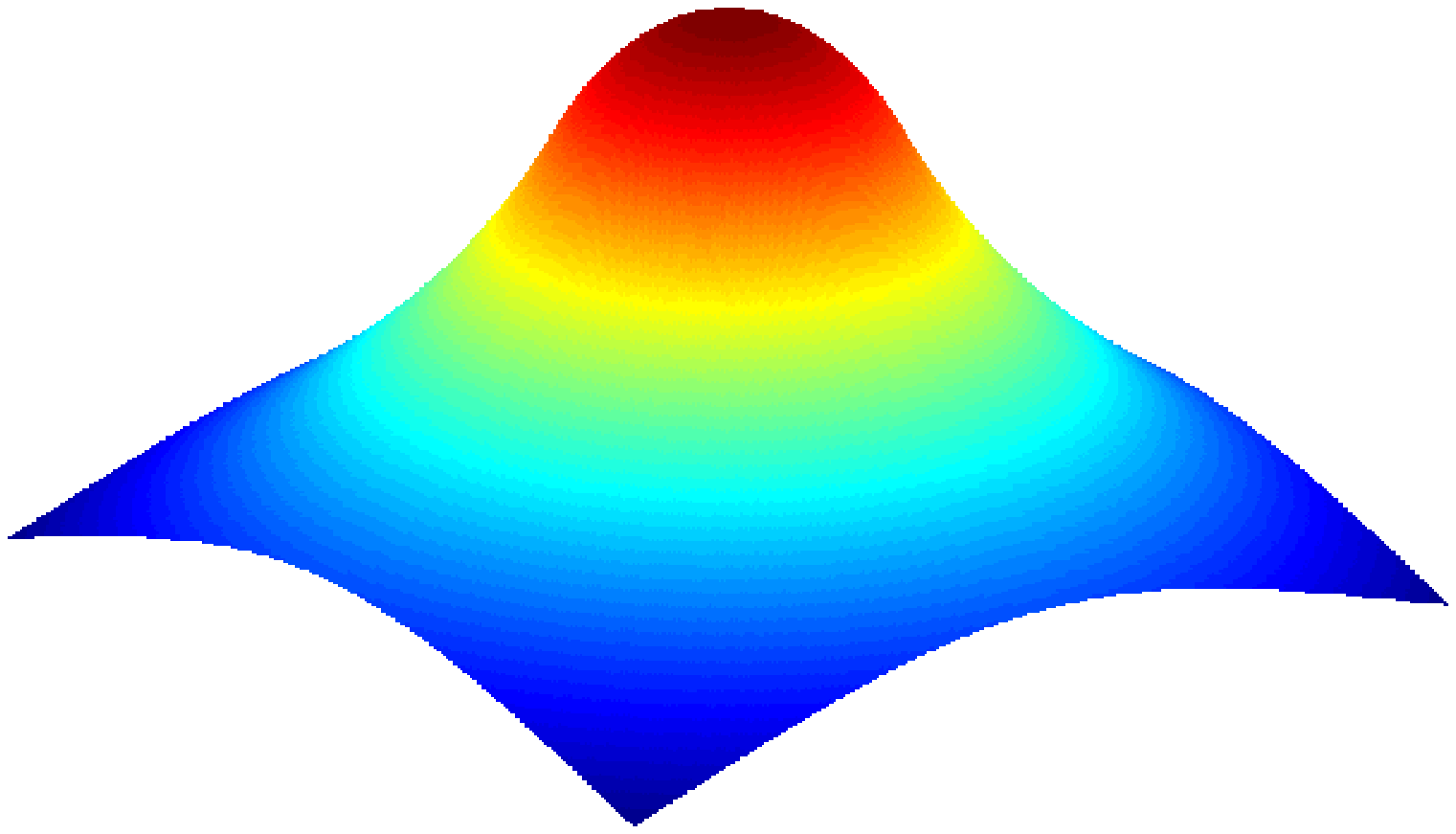}
\includegraphics[width = 2.5 in]{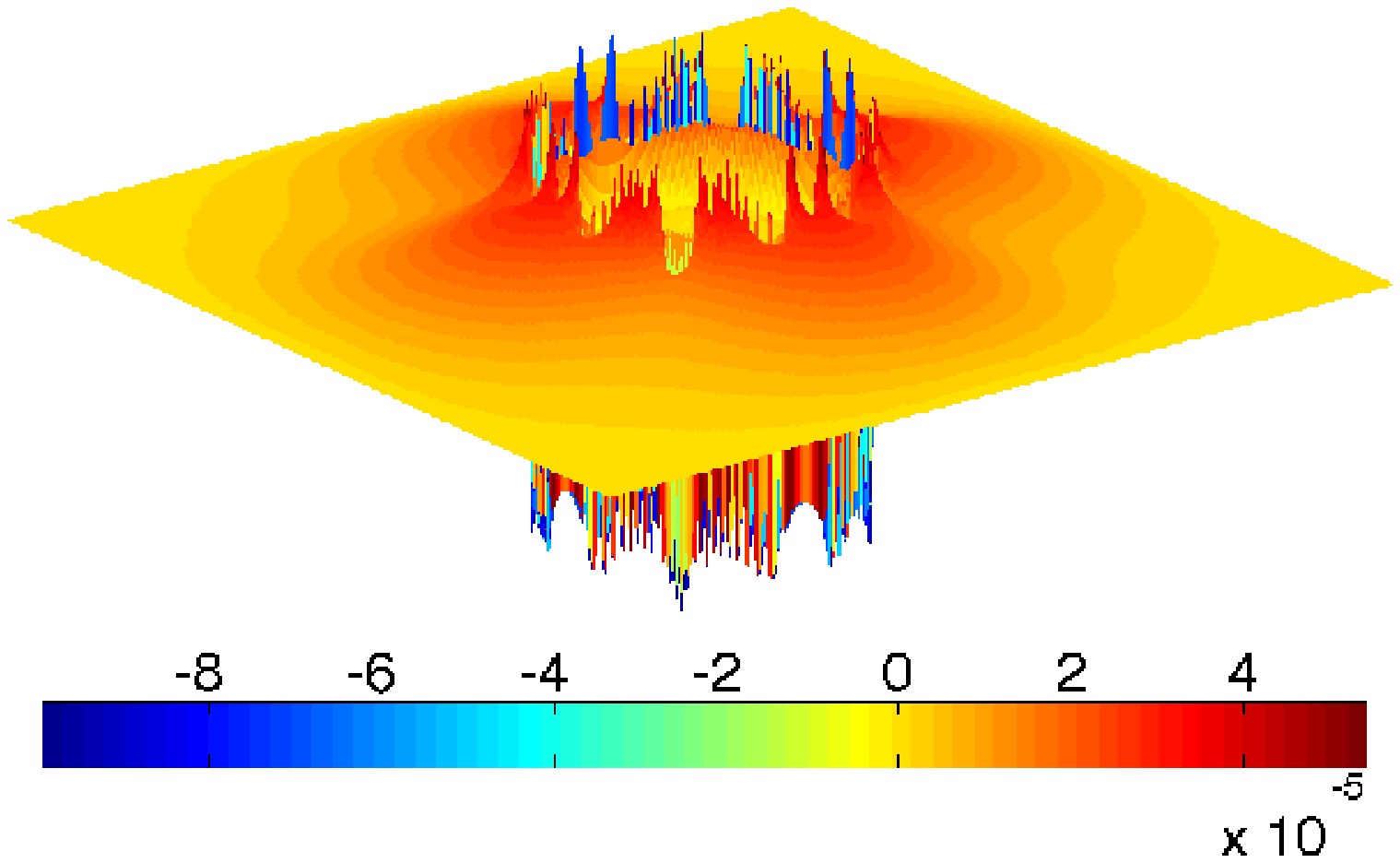}
\caption{The plots above are our numerical solution (left) and the difference with the analytic solution (right) associated with Equation~\eqref{obstacle2dexact}. The grid size is 256 by 256, the parameters are $(\mu,\lambda) =(10/h^2,20.3),$ and $ tol =10^{-6}$.}
\label{fig:obstacle2dexact}
\end{figure}

\begin{figure}[t!]
\centering
\includegraphics[width = 3.2 in]{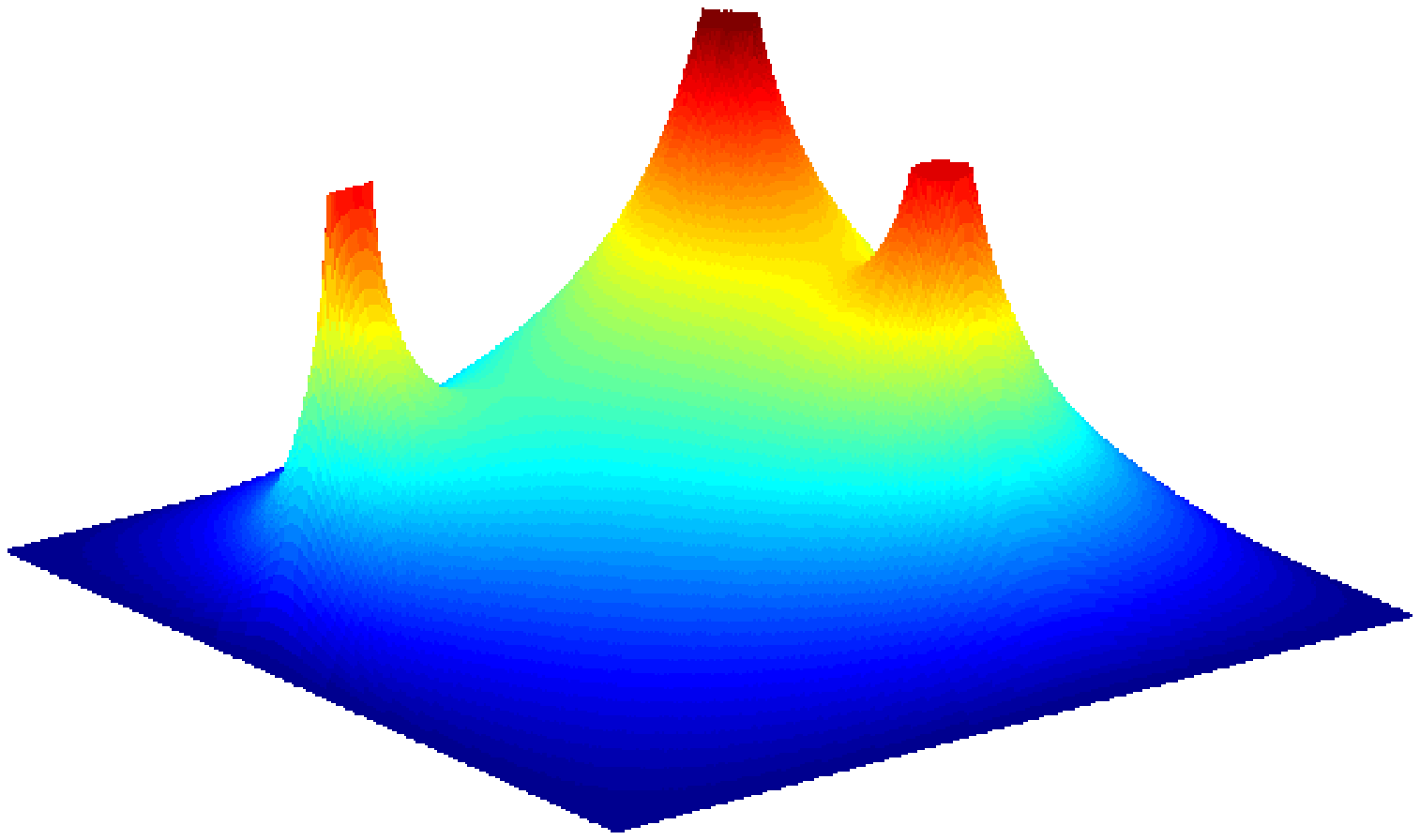}
\includegraphics[width = 1.8 in]{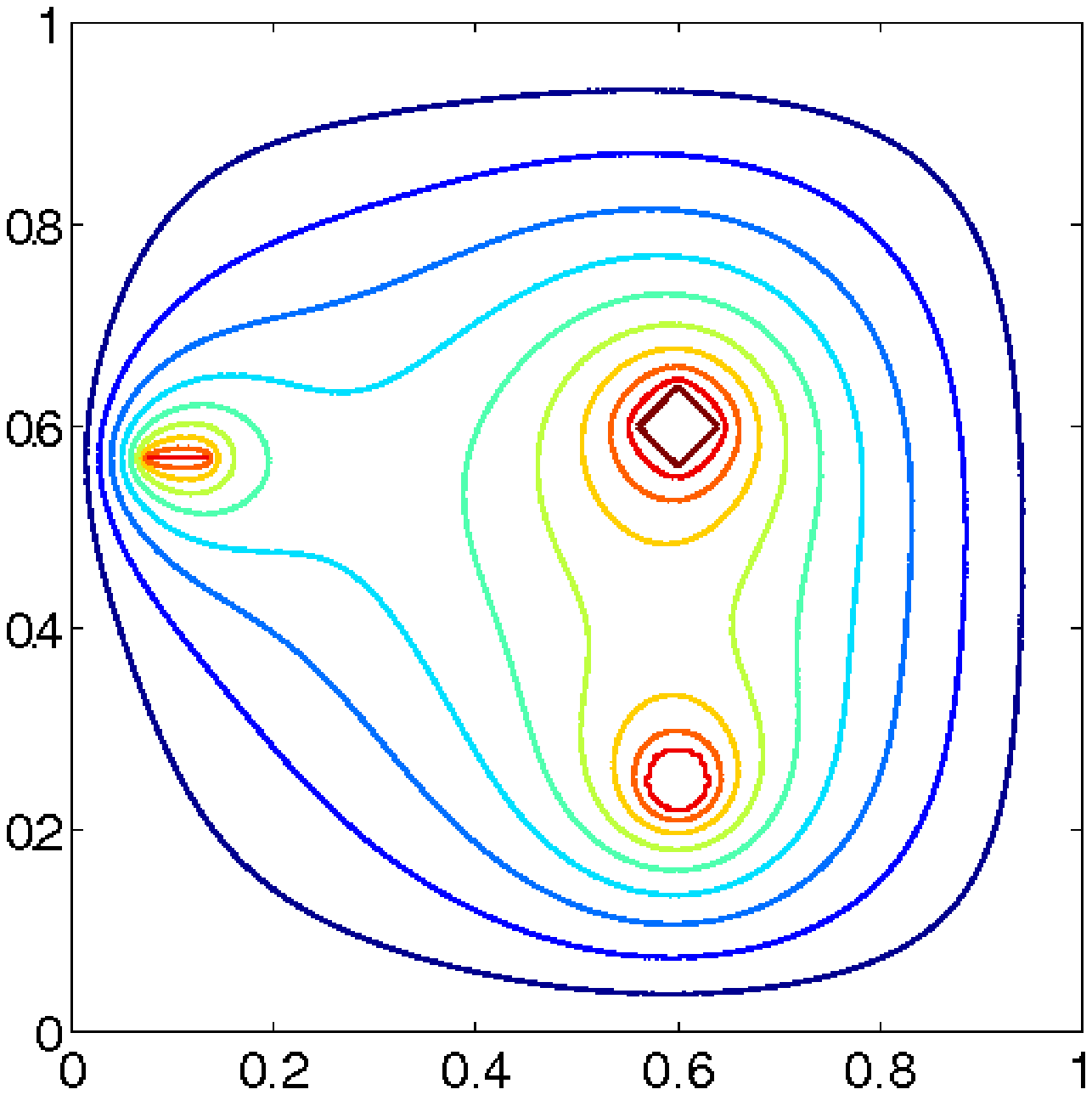}
\caption{The plots above are our numerical solution (left) and its level curves (right) of the obstacle problem associated with Equation ~\eqref{obstacle2d1}. The grid size is 256 by 256, the parameters are $(\mu,\lambda) = (6.5\times 10^5,1.3\times 10^4)$, and $tol = 5\times 10^{-4}$.}
\label{Obstacle2d}
\end{figure}

Next, to examine the behavior of non-smooth obstacles, we consider:
\begin{equation}
\varphi_3 (x,y) = \begin{cases}
5.0, &\quad\text{for}\quad |x-0.6|+ |y-0.6|<0.04\\
4.5,&\quad \text{for}\quad (x-0.6)^2 + (y-0.25)^2 <0.001\\
4.5,&\quad \text{for}\quad y = 0.57 \text{ \ and \ } 0.075 < x < 0.13 \\
0, &\quad\text{otherwise}
\end{cases}
\label{obstacle2d1}
\end{equation}
which consists of different disjoint shapes inside the domain $[0,1]\times [0,1]$. The numerical result and its level curves are shown in Fig. \ref{Obstacle2d}. One can see that the solution is smooth away from the obstacle and agrees well with the obstacle on its support set.

Finally, in Fig. \ref{Obstacle2dbumps}, an obstacle consisting of two intersect planes with a bump on each plane in the domain $[-1,1]\times [-1,1]$ is examined:
\begin{equation}
\varphi_4 = \min (x+y-2,2x+0.5y-2.5)-2e^{-60(x^2 + y^2)} - 1.5e^{-200((x-0.75)^2 + (y+0.5)^2)}.
\label{obstacle2d2}
\end{equation}
In this case, the solution agrees with the obstacle in a large portion of the domain. The analytic solution is given by the two intersecting planes, which can be seen by the linear level curves.
\begin{figure}[t!]
\centering
\includegraphics[width = 2.3 in]{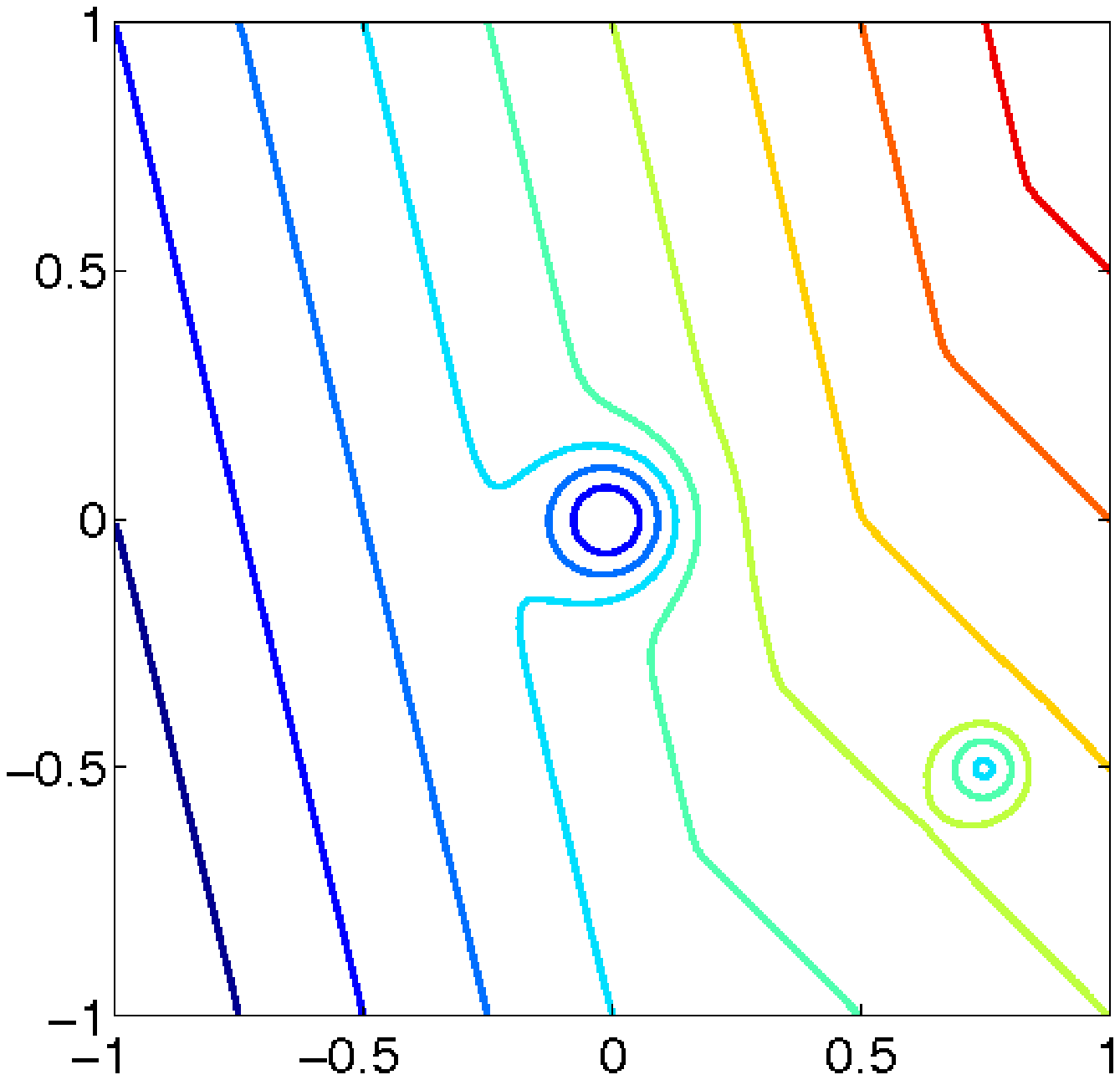}\hspace{0.3 cm}
\includegraphics[width = 2.3 in]{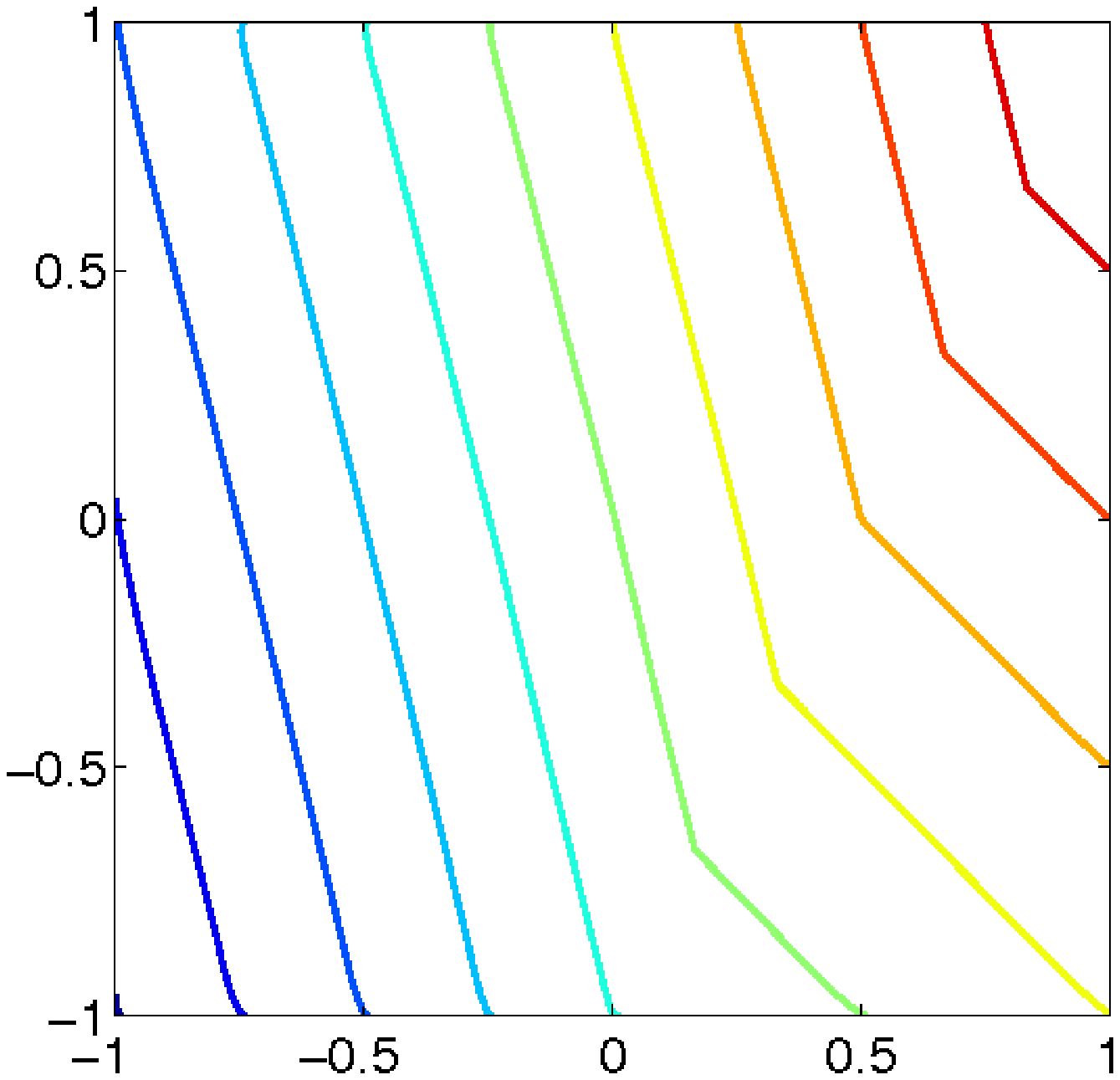}
\caption{The plots above are the level curves of the obstacle (left) and our numerical solution (right) associated with Equation~\eqref{obstacle2d2}. The grid size is 256 by 256, the parameters are $(\mu,\lambda) = (10^5,5\times 10^3)$, and $tol = 5\times 10^{-4}$. }
\label{Obstacle2dbumps}
\end{figure}

\subsection{Nonlinear Obstacle}
\begin{figure}[t!]
\centering
\includegraphics[width = 2.3 in]{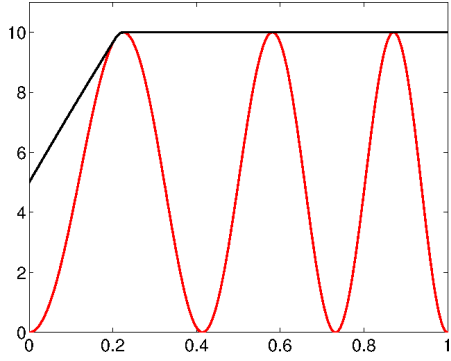}
\caption{The red curve is the obstacle and the black one is the numerical solution of the nonlinear obstacle problem (Equation~\eqref{nonlineareq}). The grid size is 512, $\lambda = 5.3, \mu = 1.1\times 10^3, \tau = 1/L = h^2/2$.}
\label{obstaclenonlinear}
\end{figure}

We would like to show that the methodology here can be easily applied to nonlinear problems, so as a proof of concept we minimize the surface tension:
\begin{align*}
\min\limits_{v\geq \varphi}\int_{0}^1\sqrt{1+|\nabla v|^2}\, dx,
\end{align*}
which is the energy associated with the classical model of stretching an elastic membrane over a fixed obstacle. The obstacle $\varphi$ is given by the oscillatory function:
\begin{equation}
\varphi = 10\sin^2(\pi (x+1)^2),\quad x\in [0,1].
\label{nonlineareq}
\end{equation}
The boundary data for this example is taken to be $u(0) = 5$ and $u(1) = 10$. The numerical solution is linear away from the contact set as can be seen in Fig. \ref{obstaclenonlinear}. 

\subsection{Two-phase membrane problem}
We examine the two-phase membrane problem:
\begin{equation*}
\min_u\int\frac{1}{2}|\nabla u|^2 + \mu_1 u_+ - \mu_2 u_-\,dx,
\end{equation*}
with different sets of $(\mu_1,\mu_2)$ and boundary conditions. First, in the symmetric case, we consider the following 1D problem:
\begin{equation}
u'' = 8\chi_{\{u>0\}} - 8\chi_{\{u<0\}} \quad \text{with}\quad
u(1) = 1, \quad u(-1) = -1,
\label{twophase1d1}
\end{equation}
whose analytic solution is given by:
\begin{equation*}
u(x) = \begin{cases} 
		-4x^2 -4x-1&\quad\text{for}\quad -1\leq x\leq -0.5\\
		0&\quad \text{for}\quad -0.5\leq x\leq 0.5\\
		4x^2 -4x +1&\quad\text{for}\quad \quad 0.5\leq x\leq 1.
	 \end{cases}		
 \end{equation*}
In Fig. \ref{TwoPhase1d1} (left), we plot our numerical solution at the third iteration and the final state. Within a few iterations, our numerical method is able to locate the correct zero set. The error versus the number of iterations is shown in Fig. \ref{TwoPhase1d1} (right), and converges nearly exponentially. For comparison of the numerical results, see \cite{bozorgnia2011numerical}.
\begin{figure}[t!]
\centering
\includegraphics[width = 2.5 in]{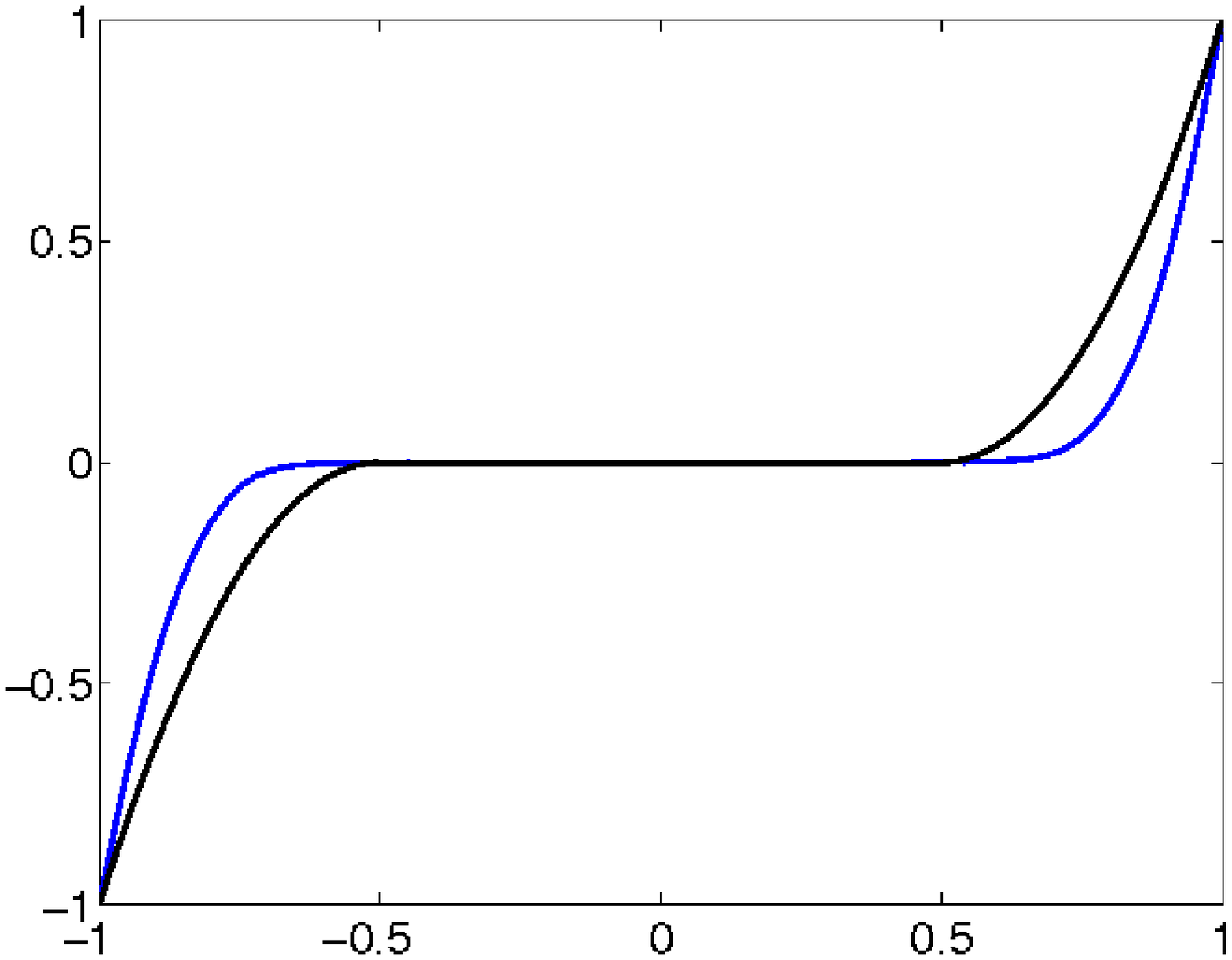}
\includegraphics[width = 2.5 in]{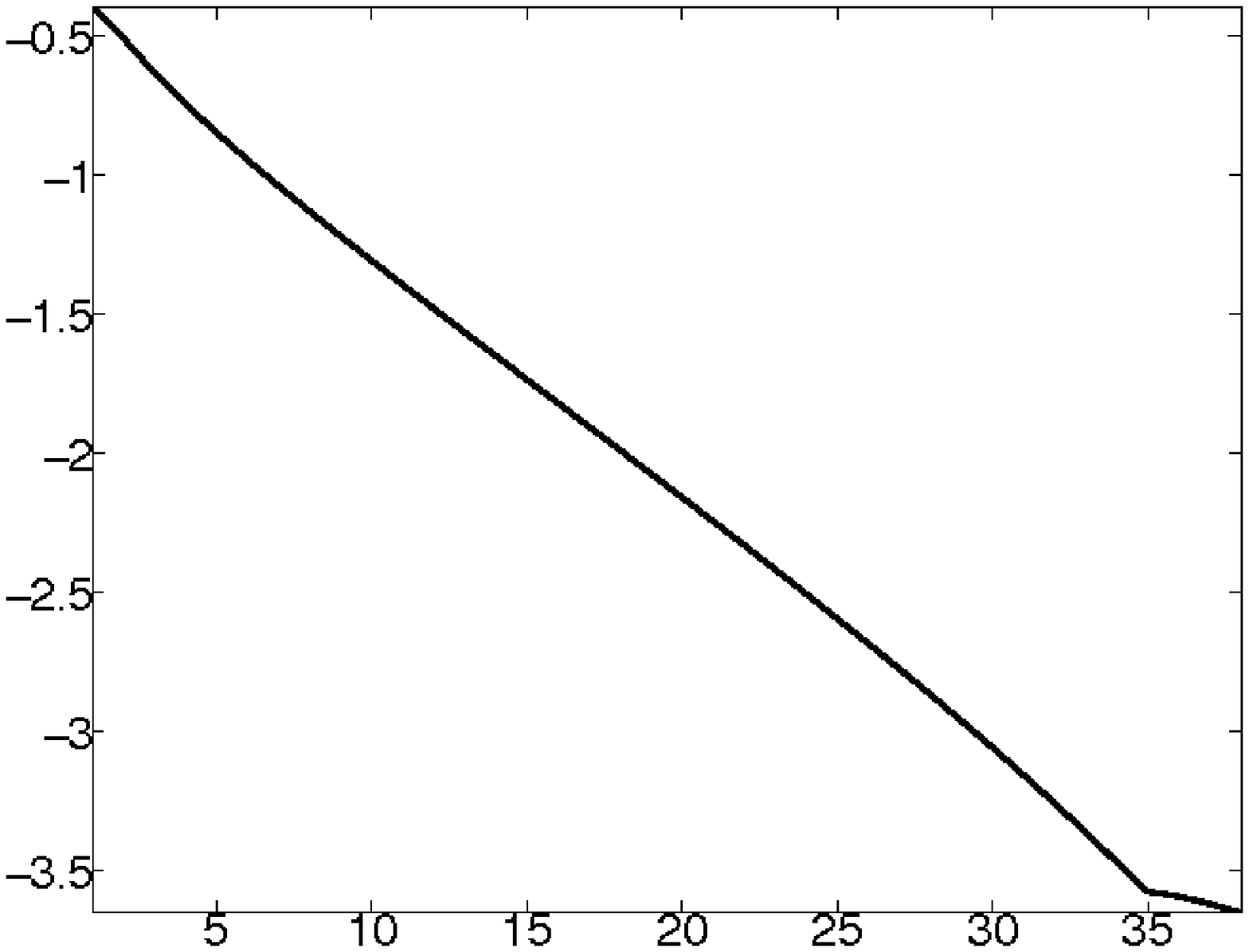}
\caption{Left: The black curve is the final numerical solution, the blue one is the numerical solution after 3 iterations of the two-phase membrane associated with Equation~\eqref{twophase1d1}. The grid size is 512, $\lambda = 204.8$ and $tol = 5\times 10^{-5}$. Right: The log error in $L^\infty$-norm between the numerical and the analytic solutions.} 
\label{TwoPhase1d1}
\end{figure}
\begin{figure}[t!]
\centering
\includegraphics[width = 2.5 in]{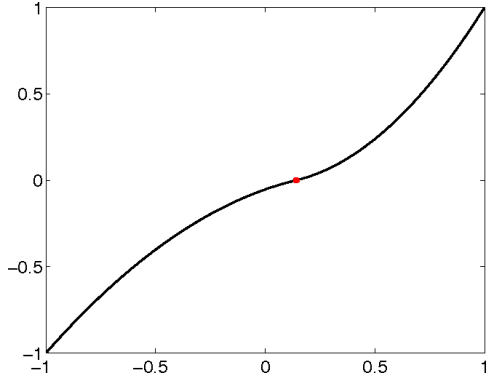}
\caption{The plot above is our numerical solution of the two-phase membrane associated with Equation~\eqref{twophase1d2}. The free boundary point is marked in red and is located at $x\approx 0.141$. The grid size is $2^{12}$, $\lambda = 3072$, and $tol =5\times 10^{-7}$.}
\label{TwoPhase1d2}
\end{figure}

Next we consider a non-symmetric equation:
\begin{equation}
u'' = 2\chi_{\{u>0\}} - \chi_{\{u<0\}} \quad \text{with}\quad
u(1) = 1, \quad u(-1) = -1,
\label{twophase1d2}
\end{equation}
The calculated free boundary is at the point $x\approx 0.141$, which was also observed in \cite{shahgholian2007two} (see Fig. \ref{TwoPhase1d2}).

For an example in 2D, we set $\mu_1 = \mu_2=1$ with Dirichlet  boundary condition $g$ given by:
\begin{equation}
g(x,y) = \begin{cases}
(1-x)^2/4\quad & -1\leq x\leq 1\,\, \text{and } \,\, y=1\\
-(1-x)^2/4\quad & -1\leq x\leq 1\,\, \text{and } \,\, y=-1\\
y^2\quad & 0\leq y\leq 1\,\, \text{and } \,\, x=-1\\
-y^2\quad & -1\leq y\leq 0\,\, \text{and } \,\, x=-1\\
0\quad & -1\leq y\leq 1\,\, \text{and } \,\, x=1.
\end{cases}
\label{branchingpoint11}
\end{equation}
In this case, the zero set has non-zero measure, see Fig. \ref{TwoPhase2d} (right). The boundary between the regions $\{u>0\}$, $\{u<0\}$ and $\{u=0\}$ contains a branching point, which we are able to resolve numerically. 
\begin{figure}[t!]
\centering
\includegraphics[width = 2.5 in]{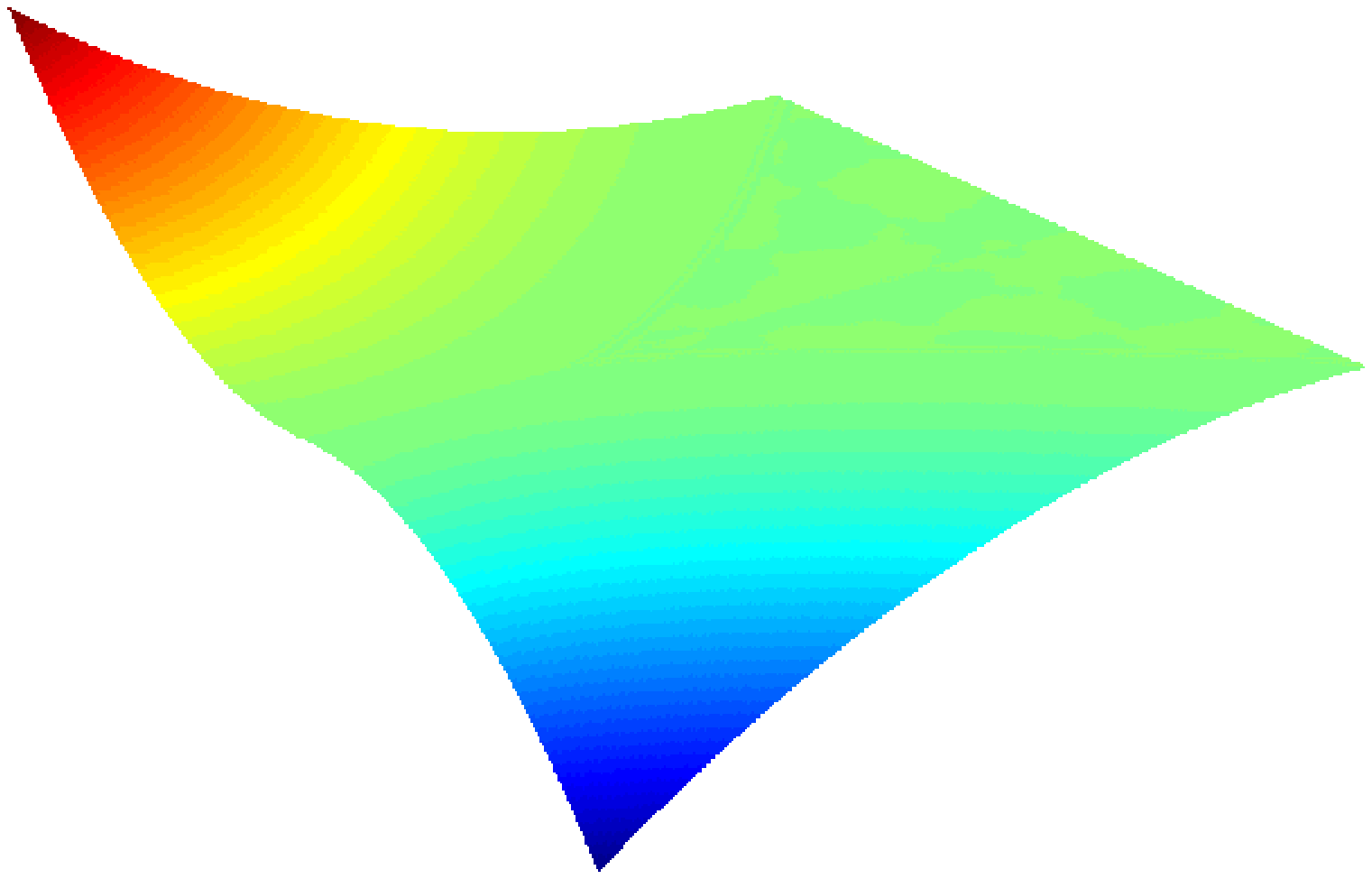}
\includegraphics[width = 2.5 in]{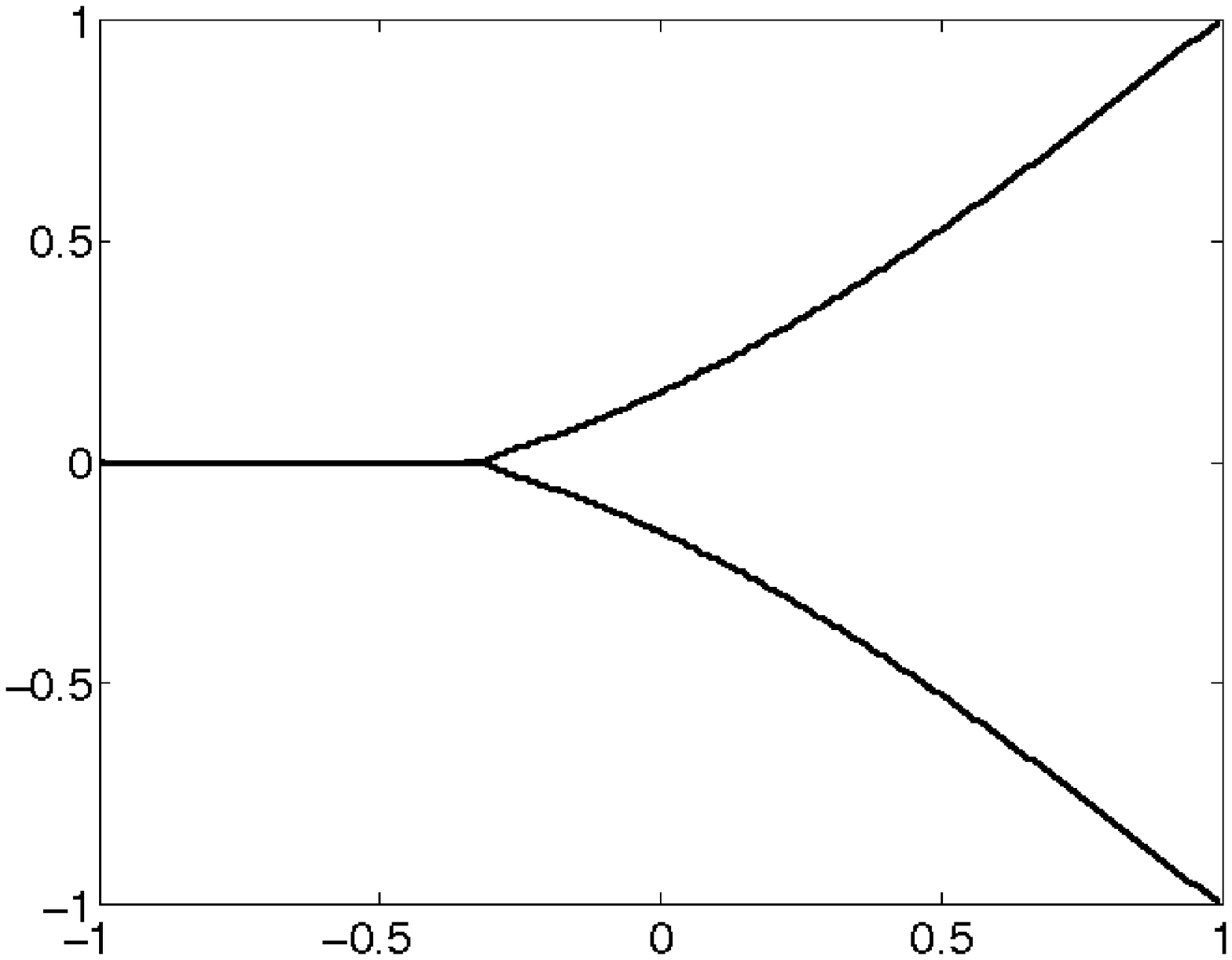}
\caption{Left: Our numerical solution associated with Equation~\eqref{branchingpoint11}. Right: the boundaries between the regions $\{u>0\}$ (top), $\{u<0\}$ (bottom) and $\{u=0\}$. The grid size is 256 by 256, $\lambda = 100.0$ and $tol = 10^{-6} $. }
\label{TwoPhase2d}
\end{figure}

\subsection{Hele-Shaw}
We present three examples of the Hele-Shaw problem:
\begin{equation*}
\min_u  \int \frac{1}{2}|\nabla u|^2+\gamma_1(\varphi - u)_+-\gamma_2 (t\chi_K - u)_- \, dx,
\end{equation*}
with different sets of $(K,\Omega_0)$. The parameters are fixed at $\gamma_1 = \gamma_2 = 1.5\times 10^4,  \lambda_1 = \lambda_2 = 150$. The free boundary starts moving from $\Omega_0$. 
\begin{table}[t!]
\begin{center}
\rowcolors{1}{lightgray}{lightgray}
 \begin{tabular}{|c|c|c|c|c|}\hline
 Grid Size & $128\times 128$ & $256 \times 256$ & $ 512 \times 512$ & $1024 \times 1024$\\ \hline
Error (radius) & 0.0238 & 0.0124 & 0.0083 & 0.0044\\
\hline
\end{tabular}
\caption{The error between the radius of the free boundary of our numerical solution and the exact radius associated with $(K,\Omega_0)$ defined in Equation~\eqref{eqn:Hele-Shawcircles} at time $t = 0.25$. The parameters are fixed at $\gamma_1=\gamma_2= 1.5\times 10^4, \lambda_1 = \lambda_2 = 150, \, tol = 10^{-6}$. The convergence rate is approximately $\mathcal{O}(h^{0.8})$.}
\end{center}
\label{Table1}
\end{table}

To validate our numerical scheme, in the first example we compare the boundary of our numerical solution and that of the ground truth solution. In particular, when both $K$ and $\Omega_0$ are circles centered at the origin:
\begin{equation}
K = \{(x,y)\in [-5,5]^2 \mid x^2 + y^2\leq 1\}, \quad \Omega_0 = \{(x,y)\in [-5,5]^2 \mid x^2 + y^2\leq 2\},
\label{eqn:Hele-Shawcircles}
\end{equation}
 the free boundary remains a circle centered at the origin for all time. Thus the radius of $\Omega$, $R_{exact}$, can be calculated explicitly. In Table 1, we compute the error between the radius of the free boundary of our numerical solution and the analytic solution at time $t = 0.25$ using different grid sizes. The experimental error in the radius is about $\mathcal{O}(h^{0.8})$, which is expected for a low dimensional structure.

Next, we present two numerical results for more complicated cases of $(K,\Omega_0)$. In Fig. \ref{HeleShawr} (left), the free boundary $\partial \Omega$ is pinned at the two acute vertices along $\partial \Omega_0$. As expected, the free boundary opens up to right angles then smooth out and move away from $\Omega_0$. For more details on this short time behavior as well as singularities in the Hele-Shaw model see  \cite{jerison2005one, tian1998singularities,nie2001singularities}. Finally, in Fig. \ref{HeleShawr} (right), we take the boundary of $\Omega_0$ to be smooth but concave. The free boundary  moves away from the initial state and begins to smooth out. 
\begin{figure}[t!]
\centering
\includegraphics[width = 2.5 in]{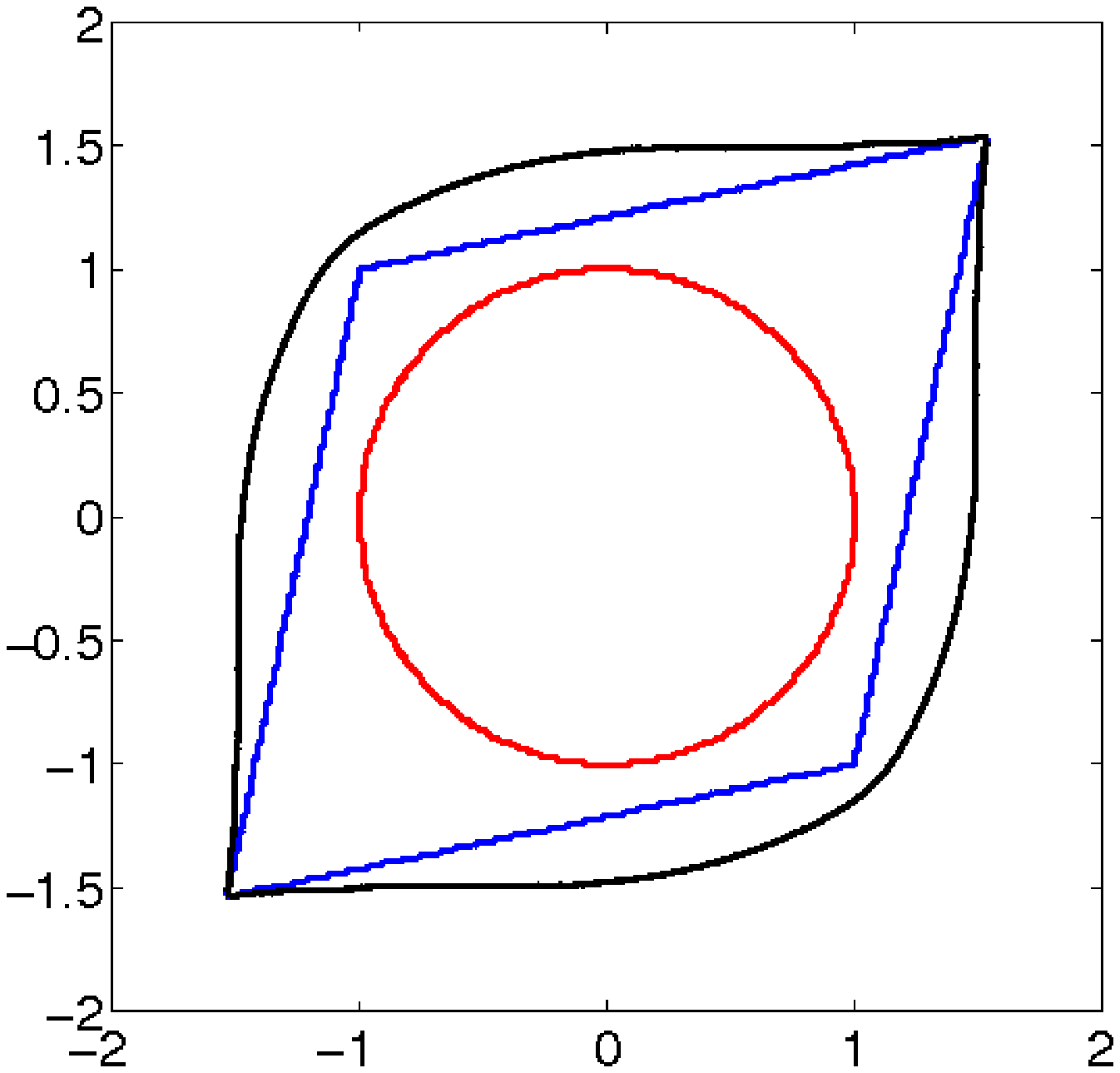}
\includegraphics[width = 2.5 in]{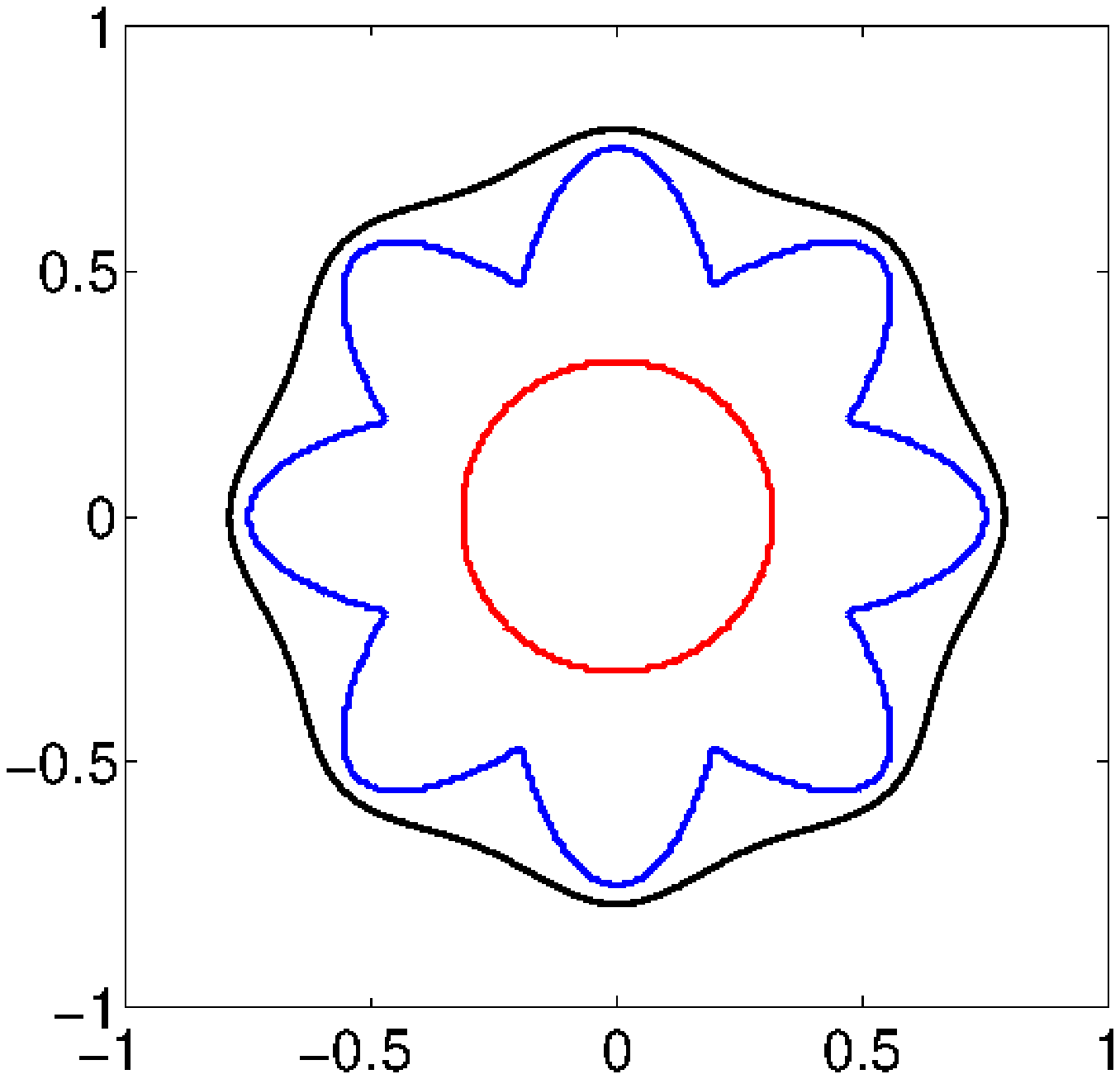}
\caption{Boundaries of the sets \textcolor{red}{K}, \textcolor{blue}{$\Omega_0$} and \textcolor{black}{the \bf  free boundary} of the Hele-Shaw problem. The grid size is 256 by 256, $ \gamma_1=\gamma_2 = 1.5\times 10^4, \lambda_1 = \lambda_2 = 150, tol = 10^{-5},$ and time $t=0.1$ and $t=0.06$, respectively. } 
\label{HeleShawr}
\end{figure}

 \medskip

\section{Conclusion}\label{sec:conclusion}

Using an $L^1$-penalty method, we are able to construct an unconstrained problem whose solutions correspond exactly to those of the obstacle problems. We provide a lower bound on the value of the penalty parameter and use this to guide our numerical calculations.  We present several experiment results showing the applicability of our method to various physical problems.

\section*{Acknowledgement}
The authors would like to thank Inwon Kim for her helpful discussions. G. Tran is supported by UC Lab 443948-B1-69763 and Keck Funds 449041-PW-58414. H. Schaeffer is supported by NSF 1303892 and University of California Presidents Postdoctoral Fellowship Program. W. Feldman is supported by NSF DMS 1300445. S. Osher is supported by ONR Grant N00014-11-1-719. 
\appendix

\bibliographystyle{plain}

\end{document}